\newtheorem{theorem}{Theorem}
\newtheorem{corollary}[theorem]{Corollary}
\newtheorem{definition}[theorem]{Definition}
\newtheorem{lemma}[theorem]{Lemma}
\newtheorem{proposition}[theorem]{Proposition}
\title{Selection of measure and a Large Deviation Principle for the general one-dimensional XY model}
\author{A. O. Lopes\,\thanks{arturoscar.lopes@gmail.com, Instituto de Matem\'atica - UFRGS - Partially supported by DynEurBraz, CNPq, PRONEX -- Sistemas
Dinamicos,  INCT, Convenio Brasil-Franca} \, and J. K. Mengue\thanks{jairokras@gmail.com,  Instituto de Matem\'atica - UFRGS} }
\begin{document}

\maketitle

\begin{abstract}

We consider $(M,d)$ a connected and compact manifold and we denote
by $X$ the Bernoulli space $M^{\mathbb{N}}$. The shift acting on $X$
is denoted by $\sigma$.

We analyze  the general XY model, as presented in a recent paper by
A. T.  Baraviera, L. M. Cioletti,  A. O. Lopes, J. Mohr  and R. R.
Souza. Denote the Gibbs measure by $\mu_{c}:=h_{c}\nu_{c}$, where
$h_{c}$ is the eigenfunction, and, $\nu_{c}$ is the eigenmeasure of
the Ruelle operator associated to $cf$. We will show that
any measure selected by $\mu_{c}$, as $c\to +\infty$, is a
maximizing measure for $f$. We also prove, when the maximizing probability
measure is unique, that a certain Large Deviation Principle holds with
the deviation function $R_{+}^{\infty}=\sum_{j=0}^\infty R_{+}
(\sigma^j )$, where $R_{+}:= \beta(f) + V\circ\sigma - V - f$ and
$V$ is any calibrated subaction.
\newline

\end{abstract}

\section{Introduction}

We consider $(M,d)$ a connected and compact finite dimensional manifold and we denote
by $X$ the Bernoulli space $M^{\mathbb{N}}$. The shift acting on $X$
is denoted by $\sigma$.

We point out that the number of preimages by $\sigma$ of each point is not countable.

Let $f: X \to \mathbb{R}$ be a fixed Holder potential defined in the
Bernoulli space $X$. We denote by $m$ the Lebesgue probability on
$M$. We suppose without loss of generality that the diameter of the manifold $M$ is smaller than one. This distance induces another one, in the usual fashion, on $M^{\mathbb{N}}$ \cite{BCLMS}.

We are interested in the Gibbs state (for finite and zero
temperature) associated to  the potential $f$. This model is called
the general $XY$ model in \cite{BCLMS}. We refer the reader to such
work for a detailed explanation about the motivation for considering
such kind of problems. We point out that in the literature in Physics what is called the XY model is the case when $M=S^1$, and, the potential $f$ depends just on a finite number of coordinates. In \cite{BCLMS} and here the hypothesis are more general.

Classical references in the $XY$ model are \cite{Isr}, \cite{Mayer} and \cite{VE} where the spin can be in a circle. A nice reference for general results in Statistical Mechanics is \cite{G}.

In order to define a transfer operator we need a probability a priori on $M$ which we will denote by $d\,m$. In the case $M=S^1$ is usual to consider the Lebesgue measure $dx$ (see \cite{VE}) as  the a priori probability.
In \cite{LMMS} it is consider the case of the one dimensional  spin lattice with  a general a priori probability and is  presented results which are generalizations of the ones in
\cite{BCLMS}.

First we will recall some definitions and results from \cite{BCLMS}.

\begin{definition}

Let  $\mathcal{C}$ be the space of continuous functions from
$X=M^\mathbb{N}$ to $\mathbb{R}$. We define  the Ruelle operator $L_f$ acting on
$\mathcal{C}$,  associated to the Holder potential $f:M^\mathbb{N}
\to \mathbb{R}$, as the linear operator that  takes $w \in
\mathcal{C}$, and send it  to $L_f(w) \in \mathcal{C}$, which is defined for any
$x=(x_0,x_1,x_2,....)\in X$, in the following way
$$L_f(w)(x)=\int e^{f(ax)}\,w(ax)\, dm\,(a)\,,
$$
where $ax$ represents the sequence $(a,x_0,x_1,x_2,....)\in X$, and
$dm\,(a)$ is the Lebesgue probability on $M$.
\end{definition}

Following \cite{BCLMS}, for a real value $c$ we consider $\beta_{c}$
the main eigenvalue, $h_{c}$ the associated  eigenfunction, and $g_{c} = cf +
\log(h_{c}) - \log(h_{c}\circ\sigma) -\log(\beta_{c})$ the
normalized function associated to the Ruelle operator $L_{c\,f}$
obtained from $cf$. We also denote by $\nu_{c}$ the eigenmeasure of
$L_{c\,f}^*$, and, by $\mu_{c}:=h_{c}\nu_{c}$, the Gibbs probability of
the potential $c f$.

Note that if for each point there exists an uncountable number of preimages, then it is necessary  to use an a priori  probability $m$ in Definition 1.

As usual, by notation $f^n(x)=\sum_{j=0}^{n-1} f(\sigma^j(x)),$  for
any $n\in \mathbb{N}$, $x \in X$.

\medskip

\textbf{Remark on notation:} the iterated Ruelle Operator $L_{f}^{n} w(x), \,
n=1,2,3...$, can be written as
\[\int_{a_{n}...a_{1}}e^{f^{n}(a_{n}...a_{1}x)}w(a_{n}...a_{1}x) \, da_{1}...da_{n}, \ \ or \ \  \int_{\sigma^{n}z=x}e^{f^{n}(z)}w(z)\, dm.\]

\medskip

The Bernoulli space $M^{\mathbb{N}}$ has the structure of a differentiable infinite dimensional manifold.
In this new setting of Thermodynamic Formalism one can ask different kinds of questions: if the potential is differentiable, is it true that there exists
a positive differentiable main eigenfunction. This question was addressed  in \cite{LMMS}. Our focus here is in a different class of problems.

 We denote by
$\mathcal M_\sigma$ the compact set of invariant probability measures for
$\sigma$.

We  consider the following problem: for a given $f:X \to
\mathbb{R}$, we want to find probability measures which  maximize, over $
\mathcal M_\sigma$, the value $ \int f(x) \,d \mu(\mathbf {x}). $

\begin{definition}\label{max1}
We define  
$$\beta(f)=\max_{\mu\in\mathcal M_\sigma} \left\{ \int f d\mu \right\}\,.$$

Any of the measures which attains the maximal value will be called a
maximizing probability measure for $f$, which is sometimes denoted by
$\mu_\infty$.

\end{definition}

There exist Holder potentials $f$ such that the maximizing probability is not unique. In case of the shift acting on the Bernoulli space
$\{1,2,.,d\}^\mathbb{N}$, it is known that for a generic potential $f$  in the Holder norm the maximizing probability is unique (see \cite{CLT}). It can be for instance a probability with support in a periodic orbit. A basic reference for the classical study of maximizing probabilities is \cite{BLL1}.

It is known that for any fixed $x\in M^{\mathbb{N}}$ the probabilities $\mu_{n,c}$ defined by
\[\int\,w\,d\mu_{n,c} \,:= L_{g_c}^{n}(w)(x)\]
converges to $\mu_c$ in the weak* topology, when, $n\to\infty$ (see for instance  \cite{BCLMS}).

The Classical Thermodynamic Formalism setting
considers the shift acting on the Bernoulli space
$\{1,2,.,d\}^\mathbb{N}$ (see \cite{PP}).
In this case one can consider the classical Kolmogorov
entropy and pressure.  In this way, the  variational principle can be used to prove that any limit of the the Gibbs measure $\mu_c$ (when $c\to\infty$) is a maximizing measure to $f$ (see Prop. 29 in  \cite{CLT}
or \cite{CoG}).
In order to prove this last result in the present setting we use a different approach: we consider the double limit\footnote{Given a double indexed sequence $z_{c,n}$, $c\in \mathbb{R}$,
$n \in \mathbb{N}$, we say that $\lim_{c,n\to\infty} z_{c,n}=w$, if for any
given $\epsilon>0$, there exists an $M>0$, such that, if $c,n>M$, then
$|z_{c,n}- w|<\epsilon$.
} for $\mu_{n,c}$.

In Section 2 we will show the following:
\begin{theorem}\label{theo1}
Any weak$^*$-limit of subsequence of $\mu_{c}$, $(c\to +\infty)$, is
a maximizing probability measure for $f$. If the maximizing probability $\mu_\infty$ for $f$ is unique, then $\mu_c\to \mu_\infty$, when $c \to \infty$.
\end{theorem}

It is known that the parameter $c$ corresponds to the inverse of temperature in Statistical Mechanics.
In this way one can say that any convergent subsequence of Gibbs
states at positive temperature, when $c\to \infty$, selects at zero temperature maximizing probabilities. For this result we do not assume uniqueness of the maximizing probability.

  A ground state is by definition a maximizing probability that can be ``selected" as the limit of $\mu_{c_n}$ for some subsequence $c_n \to \infty$. There are examples of Holder potentials $f$ where certain maximizing probabilities are not ground states (see \cite{Bremont}, \cite{leplaideur-max}, \cite{BLL}, \cite{BLM}).
From the above we realize that maximizing probabilities are related to the concept of ``Gibbs state at temperature zero".

In the present case a definition of entropy is possible and this is described in detail in \cite{LMMS}. 
In this case the entropy is a non positive number (see also \cite{LNT}).
The main conclusions one can get from \cite{LMMS} is that: I) In the concept of Kolmogorov entropy there exist an a priori measure which is hidden; II)
Entropy and Ruelle operator are concepts which are linked.

Here we will not take advantage of the approach described in \cite{LMMS} and our proof
will use other more simple methods, where we consider the double limit for $L_{gc}^{n}(\cdot)(x)$.

\begin{definition}\label{sub}
A continuous function $V: X \to \mathbb{R} $ is called a {\em
calibrated subaction} for $f:X\to \mathbb{R}$, if for any $y\in X$,
we have that

\begin{equation}\label{c} V(y)=\max_{\sigma(x)=y} [f(x)+ V(x)-\beta(f)].\end{equation}

\end{definition}

This can be also be expressed as
$$\beta(f)= \max_{a \in M} \{f(ay)+  V (ay) - V(y) \} .$$

One can show that for any $x$ in the support of a maximizing probability measure for $f$ we have that
$$ V(\sigma(x)) -  V(x) - f(x) + \beta(f)=0.$$ In this way if we know the value $\beta(A)$, then a calibrated subaction $V$ for $f$ helps to identify the union of the supports of maximizing probabilities $\mu_\infty$ for $f$. The above equation can be eventually true outside the union of the supports of the  maximizing probabilities  $\mu$.

It is known that $\frac{1}{c} \log (h_{c})$, $c \in
\mathbb{R}$, is a equicontinuous family. Any limit of subsequence
$V= \lim_{n \to \infty} \frac{1}{c_n} \log (h_{c_n})$, $c_n \to
\infty$, is a calibrated subaction (see \cite{BCLMS}).
If the maximizing probability is unique, then the calibrated
subaction is unique up to an additive constant (see \cite{BLT}
 \cite{GL1}). In this way we write $R_{+}^{\infty}=\sum_{j=0}^\infty R_{+}
(\sigma^j )$, with  $R_{+}:= \beta(f) + V\circ\sigma - V - f$, where
$V$ is any chosen  calibrated subaction.

In section 3 we consider the case where the maximizing measure for
$f$ is unique, and prove that the family $L_{g_c}^{n}(\cdot)(x)$ satisfies the
following kind of Large Deviation Principle:

\begin{theorem}\label{ldpruelle1}
Suppose that the maximizing probability for $f$ is unique. Consider any
point $x\in X$, then, for any closed set $F$, and any open set $A$:
\[ \limsup_{c,n\to\infty} \frac{1}{c}\log((L_{g_{c}}^{n}\chi_F)(x))
\leq - \inf_{z\in F} (R_{+}^{\infty}(z)),\]
\[ \liminf_{c,n\to\infty} \frac{1}{c}\log((L_{g_{c}}^{n}\chi_A)(x))
\geq - \inf_{z\in A} (R_{+}^{\infty}(z)).\]
The function $I=R_{+}^{\infty}$ is lower semicontinuous, non negative and zero at least in the support of the maximizing probability.
\end{theorem}

The estimation of the asymptotic above via the Ruelle operator is useful because the computations of most of the
important objets in Thermodynamic Formalism are obtained via this operator.

From the last theorem, taking $x$ fixed, $n\to\infty$, and using the fact that $L_{g_c}^{n}(\cdot)(x)$ converges to $\mu_c$ weakly* (see \cite{BCLMS}) we get the following:

\begin{corollary}
Suppose $f: (\mathbb{S}^{1})^\mathbb{N}\to \mathbb{R}$ is  a fixed Holder potential defined in the
Bernoulli space. For any cylinder $F=I_1 \times...\times I_n \times \mathbb{S}^{1}\times  \mathbb{S}^{1}\times \mathbb{S}^{1}\times....,$ where each $I_j$ is a closed interval of $\mathbb{S}^{1}$ we have
\[ \limsup_{c\to\infty} \frac{1}{c}\log(\mu_c(F))
\leq - \inf_{z\in F} (R_{+}^{\infty}(z)).\]
For any cylinder $A=I_1 \times...\times I_n \times \mathbb{S}^{1}\times  \mathbb{S}^{1}\times \mathbb{S}^{1}\times....,$ where each $I_j$ is a open interval of $\mathbb{S}^{1}$ we have
\[ \liminf_{c\to\infty} \frac{1}{c}\log(\mu_c(A))
\geq - \inf_{z\in A} (R_{+}^{\infty}(z)).\]

\end{corollary}

We point out that the reasonings which we use in the proofs of the above results can
also be applied to the  Classical Thermodynamic Formalism setting
\cite{PP}, where the Bernoulli space is $\{1,2,...,d\}^\mathbb{N},$
to get the analogous result presented in \cite{BLT}. The present proof of the L. D. P. does not use
the involution kernel as in \cite{BLT}.

We say that the potential $f$ depends on two variables if for any $x=(x_0,x_1,x_2,x_3....)\in X$ we have that the value  $f(x_0,x_1,x_2,x_3....)$ is independent of $(x_2,x_3,x_4,....)$. This case is also known as the ``nearest-neighbour interaction". The last corollary was shown to be true in the case the potential $f$ depends on two coordinates in \cite{LMST}. We point out that our Corollary 6 is for a general Holder function $f$.

\medskip

There are two kinds of Large Deviations which are more commonly study:

I) Large deviations in time - In this case one is interested in results like: given an ergodic invariant measure $\mu$ for the shift
(or another  dynamical system) acting on $\{1,2,.,d\}^\mathbb{N}$, then,   what is the probability on $x$ that a given Birkhoff sum $ \frac{1}{n} \sum_{j=0}^{n-1} \varphi(\sigma^j (x)) $ deviates of the mean value $\int \varphi\, d \mu$ by, let's say, an error of $\epsilon$? This is what \cite{Ellis} call Level 1 L. D. P..

General references for this kind of dynamical results are for instance  \cite{LSY}, \cite{Ki1}, \cite{L2}, \cite{L7}  and \cite{L3}. These kind of problems can be studied in a more broad sense for general stochastic processes (see \cite{Ellis}, \cite{DZ}). The main point in the ``dynamical setting of expanding maps and Gibbs probabilities for Holder potentials" is that one can exhibit in an explicit form the deviation function (via Legendre transform of the pressure) which is analytic.  Differentiability of pressure plays an important role in all this.

In \cite{SSS} the authors show that the Ruelle operator is an analytic function of the Holder potential and this result can be applied to the present situation. Therefore, we believe it is just a question of rewriting the classical arguments of Thermodynamic Formalism on our setting  in order to show that
indeed is true a Large Deviation principle in time for the XY model.

\medskip

II) Large deviations for measures indexed by a parameter $c\in \mathbb{R}$ - General references for this kind of problem are \cite{St}(see section 0. Introduction) and \cite{DZ}. In this last  book in section 4.3 the parameter $\epsilon\to 0$ corresponds here to $c=1/\epsilon \to \infty$.

Consider a family of probabilities $\mu_\epsilon$ such that, $\lim_{\epsilon \to 0}\mu_\epsilon=\mu$. Given a set $J$ such that $\mu(J)=0$ one can
ask about the exponential velocity such that $\mu_\epsilon(J)$ goes to zero, as $\epsilon \to 0.$ This is the other kind of Large Deviation which is also very much analyzed.

The famous theorem of Schilder concerns a renormalization of time of the Probability of the Brownian motion by a small parameter $\epsilon$ (see Theorem 5.2.3 in \cite{DZ}). In Aubry-Mather theory this result is applied to the study of viscosity solutions, subactions   and Large Deviations  associated to diffusions (see Appendix in \cite{A2} and \cite{A1}).

We point out that here (in a similar way as in \cite{BLT}) we are able to express the deviation function $I$ in an explicit form from the information given by the calibrated subaction.
This deviation function $I$ is quite irregular: it is just lower semicontinuous and equal to $-\infty$ in most of the points. The bottom line is: L. D. P in   II) is quite different from I).

\medskip

Most  of the  ideas of the present paper are generalizations of results which are contained in \cite{M} where was considered just  the classical case of the shift acting on  $\{1,2,..,d\}^\mathbb{N}.$

In \cite{LM} is presented another kind of Large Deviation Principle:
the setting of zeta measures. In this case the proof does not require
that the maximizing probability is unique.

\section{The selection of measure}

\begin{lemma}
Let $V$ be a calibrated subaction, such that,  $V= \lim_{c \to
\infty} \frac{1}{c} \log (h_{c})$, and, $R_{-} = f + V -
V\circ\sigma - \beta(f)$, which is the limit function of the
$g_{c}/c$ associated.
For each $\epsilon>0$, there exists a constant $\psi_\epsilon$, such that,
for any $x \in X$
$$m(\{a\in M: R_{-}(ax) >-\epsilon\})>\psi_\epsilon>0.$$
\end{lemma}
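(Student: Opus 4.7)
The plan is to exploit the calibrated subaction equation in the form
$$\max_{a \in M} R_{-}(ax) \;=\; 0 \quad \text{for every } x \in X,$$
which is just a rearrangement of $\beta(f) = \max_{a \in M}\{f(ax) + V(ax) - V(x)\}$ (using $\sigma(ax)=x$). Since $M$ is compact and $a \mapsto R_{-}(ax)$ is continuous for fixed $x$, the maximum is attained at some $a^{*}(x) \in M$, so $R_{-}(a^{*}(x)\,x) = 0$. Thus at every point $x$ the set where $R_-(\,\cdot\, x)$ exceeds $-\epsilon$ is non-empty; the task is to show that it has positive $m$-measure bounded below \emph{uniformly} in $x$.

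The second step is to note that $R_{-} = f + V - V\circ \sigma - \beta(f)$ is continuous on the compact space $X = M^{\mathbb{N}}$, because $f$ is H\"older and $V$ is continuous (being a uniform limit of the equicontinuous family $\tfrac{1}{c}\log h_{c}$ recalled in the introduction). Hence $R_-$ is uniformly continuous. Under the product metric used in \cite{BCLMS} one has $d_{X}(ax, a'x) \le C\, d(a, a')$ with a constant $C$ independent of $x$, so uniform continuity provides, for every $\epsilon > 0$, some $\delta = \delta(\epsilon) > 0$ such that $d(a, a') < \delta$ implies $|R_{-}(ax) - R_{-}(a'x)| < \epsilon$ uniformly in $x$. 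Specialising to $a' = a^{*}(x)$ gives the inclusion
$$\{a \in M : R_{-}(ax) > -\epsilon\} \;\supset\; B_{\delta}(a^{*}(x)),$$
where $B_\delta$ denotes the open ball of radius $\delta$ in $M$.

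The last step is to bound $m(B_\delta(b))$ from below uniformly in $b \in M$. Since $m$ is the Lebesgue probability on the compact connected manifold $M$, every open ball has strictly positive $m$-mass, and the map $b \mapsto m(B_\delta(b))$ is lower semicontinuous on the compact space $M$, hence attains a strictly positive minimum $\psi_\epsilon > 0$. Combining this with the inclusion above yields the required uniform estimate.

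The overall argument is a straightforward compactness + continuity exercise; the one step I would verify carefully is the uniformity of the modulus of continuity of $R_-$ in the zeroth coordinate, as it is precisely that uniformity (valid because the zeroth coordinate carries the dominant weight in the product metric) which lets a single $\psi_\epsilon$ work simultaneously for every $x \in X$.
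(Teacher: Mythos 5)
Your proof is correct, but it takes a genuinely different route from the paper's. The paper never invokes the fiberwise identity $\max_{a\in M} R_{-}(ax)=0$ nor any metric property of $M$; instead it works at finite temperature: writing $g_{c}=cR_{-}+\delta_{c}$ with $|\delta_{c}|_{\infty}/c\to 0$ and using the normalization $1=L_{g_{c}}1(x)=\int e^{cR_{-}(ax)+\delta_{c}(ax)}\,dm(a)$, it splits this integral over $A_{\epsilon}=\{a: R_{-}(ax)\le -\epsilon\}$ and $B_{\epsilon}=\{a: R_{-}(ax)>-\epsilon\}$, bounds the contribution of $A_{\epsilon}$ by $e^{-c\epsilon+|\delta_{c}|_{\infty}}$, fixes a single $c_{0}$ with $e^{-c_{0}\epsilon+|\delta_{c_{0}}|_{\infty}}\le 1/2$, and reads off $m(B_{\epsilon})\ge \frac{1}{2}e^{-|\delta_{c_{0}}|_{\infty}}$, a bound uniform in $x$ because it involves only sup-norms. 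Your argument --- calibration produces $a^{*}(x)$ with $R_{-}(a^{*}(x)x)=0$, uniform continuity of $R_{-}$ in the zeroth coordinate gives $B_{\delta}(a^{*}(x))\subset\{a:R_{-}(ax)>-\epsilon\}$, and compactness gives $\inf_{b\in M} m(B_{\delta}(b))>0$ --- replaces this operator-theoretic input by metric and measure-theoretic input. What your route buys: it is purely a statement about calibrated subactions, using neither the eigenfunctions $h_{c}$ nor the identity $L_{g_{c}}1=1$, so it applies verbatim to any continuous calibrated subaction $V$, not only to those arising as limits of $\frac{1}{c}\log h_{c}$. What the paper's route buys: it uses nothing about $m$ beyond its role as the a priori measure in the transfer operator, so it would survive an a priori probability whose small balls can have arbitrarily small (or zero) mass, whereas your last step genuinely requires $\inf_{b}m(B_{\delta}(b))>0$, i.e.\ full support plus compactness (which does hold for Lebesgue measure on a compact manifold, as here). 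Two minor points to tidy up: your claim $d_{X}(ax,a'x)\le C\,d(a,a')$ uniformly in $x$ is indeed valid (with $C=1$ for the metric of \cite{BCLMS}, since the two sequences agree in all coordinates past the zeroth); and to obtain the strict inequality $m(\cdot)>\psi_{\epsilon}$ as stated, take $\psi_{\epsilon}$ to be, say, half of $\inf_{b}m(B_{\delta}(b))$ --- the same cosmetic adjustment the paper makes when it replaces the factor $\frac{1}{2}$ by $\frac{1}{3}$.
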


\begin{proof}
Suppose $g_{c}$ converges to $R_{-}$, and then  write $g_{c}=cR_{-}
+\delta_{c}$, where $|\delta_{c}|_{\infty}/c \to 0$. Using the fact that $V$
is a calibrated subaction, we have that $R_{-}\leq 0$.

We fix $\epsilon >0$, and we define
\[A_{\epsilon}:=\{a: R_{-}(ax)\leq -\epsilon\}\]
\[B_{\epsilon}:=\{a: R_{-}(ax) > -\epsilon\}.\]
$V$ is Holder, so $R_{-}$ is Holder, then, it is a continuous
function on the first symbol. In this way, $A_{\epsilon}$ and
$B_{\epsilon}$ are mensurable sets. Then, we get
\[1 = L_{g_{c}}1(x) = \int e^{g_{c}(ax)} da = \int e^{cR_{-}(ax) + \delta_{c}(ax)} da.\]
Therefore,
\[1 = \int_{A_{\epsilon}} e^{cR_{-}(ax) + \delta_{c}(ax)} da + \int_{B_{\epsilon}} e^{cR_{-}(ax) + \delta_{c}(ax)} da\]
\[\leq \int_{A_{\epsilon}} e^{-c\epsilon + \delta_{c}(ax)} da + \int_{B_{\epsilon}} e^{0 + \delta_{c}(ax)} da\]
\[\leq \int_{A_{\epsilon}} e^{-c\epsilon + |\delta_{c}|_{\infty}} da + \int_{B_{\epsilon}} e^{0 + |\delta_{c}|_{\infty}} da\]
\[=e^{-c\epsilon + |\delta_{c}|_{\infty}} m(A_{\epsilon}) + e^{|\delta_{c}|_{\infty}}m(B_{\epsilon})\]
\[\leq e^{-c\epsilon + |\delta_{c}|_{\infty}}  + e^{|\delta_{c}|_{\infty}}m(B_{\epsilon}).\]
Let $c_{0}>0$ be such that $e^{-c_{0}\epsilon +
|\delta_{c_{0}}|_{\infty}}\leq 1/2$. Then, it follows that
\[1/2 \leq  e^{|\delta_{c_{0}}|_{\infty}}m(B_{\epsilon}),\]
so,
\[m(B_{\epsilon}) \geq \frac{1}{2e^{|\delta_{c_{0}}|_{\infty}}}.\]

Then, we just take $\psi_\epsilon =
\frac{1}{3e^{|\delta_{c_{0}}|_{\infty}}}$, and the result follows.
\end{proof}

\textbf{Proof of  Theorem $\ref{theo1}$}

\begin{proof}
Let $\nu$ be an accumulation point of $\mu_{c}$ given by the limit of a certain
subsequence $c_{j}\to \infty$. Let $c_{i}$ be a subsequence obtained from  this first
one such that there exists the limit $V$ of the sequence
$\frac{1}{c_i}\log h_{c_i}$. Let $R_{-}:= f+V-V\circ\sigma -
\beta(f)$ be the function associated to such limit. Then, we get
$g_{c_{i}}/c_{i} \to R_{-}$. Define $a:=\lim_{c_{i}\to\infty}
\mu_{c_{i}}(R_{-}) = \lim_{c_{j}\to\infty} \mu_{c_{j}}(R_{-})$.
Then, it follows that $a\leq 0$. We will  show that $a\geq
0$. More precisely we are going to show that for any fixed $x\in X$:
\[\liminf_{i,n \to\infty} L_{g_{c_{i}}}^{n}(R_{-})(x) \geq 0.\]

\bigskip

We write $\log h_{c_{i}} = c_{i}V + \delta_{i}$, where
\[\frac{|\delta_{i}|_{\infty}}{c_{i}} \to 0 .\]

We also write
\[L_{g_{c_{i}}}^{n}(R_{-})(x) = \int_{a_{n}...a_{1}}e^{g_{c_{i}}^{n}(a_{n}...a_{1}x)}R_{-}(a_{n}....a_{1}x) da_{1}...d_{a_{n}}\]
\[ = \frac{\int_{a_{n}...a_{1}} e^{c_{i}f^{n}(a_{n}...a_{1}x) +\log (h_{c_{i}}(a_{n}...a_{1}x)) - \log (h_{c_{i}}(x))}R_{-}(a_{n}...a_{1}x)da_{1}...da_{n}}{\int_{a_{n}...a_{1}}e^{c_{i}f^{n}(a_{n}...a_{1}x) +\log (h_{c_{i}}(a_{n}...a_{1}x)) - \log (h_{c_{i}}(x))}da_{1}...da_{n}}\]
\[= \frac{\int_{a_{n}...a_{1}}e^{c_{i}f^{n}(a_{n}...a_{1}x) +\log (h_{c_{i}}(a_{n}...a_{1}x)) }R_{-}(a_{n}...a_{1}x) da_{1}...da_{n}}{\int_{a_{n}...a_{1}}e^{c_{i}f^{n}(a_{n}...a_{1}x) +\log (h_{c_{i}}(a_{n}...a_{1}x))} da_{1}...da_{n} }\]
\[= \frac{\int_{a_{n}...a_{1}}e^{c_{i}f^{n}(a_{n}...a_{1}x) +c_{i}V(a_{n}...a_{1}x) + \delta_{i}(a_{n}...a_{1}x) -c_{i}V(x) -c_{i}\beta(f)}R_{-}(a_{n}...a_{1}x) da_{1}...da_{n}}{\int_{a_{n}...a_{1}}e^{c_{i}f^{n}(a_{n}...a_{1}x) + c_{i}V(a_{n}...a_{1}x) + \delta_{i}(a_{n}...a_{1}x) - c_{i}V(x) - c_{i}\beta(f)} da_{1}...da_{n}}\]
\[= \frac{\int_{a_{n}...a_{1}}e^{c_{i}R_{-}^{n}(a_{n}...a_{1}x) + \delta_{i}(a_{n}...a_{1}x)  }R_{-}(a_{n}...a_{1}x) da_{1}...da_{n}}{\int_{a_{n}...a_{1}x}e^{c_{i}R_{-}^{n}(a_{n}...a_{1}x) + \delta_{i}(a_{n}...a_{1}x) } da_{1}...da_{n}}.\]

For a fixed $\varepsilon>0$, we define the sets:
\begin{align*}
A_{n} &:= \{a_{n}...a_{1} : R_{-}(a_{n}...a_{1}x)<-\varepsilon\},\\
B_{n} &:= \{a_{n}...a_{1}: R_{-}(a_{n}...a_{1}x)\geq -\varepsilon\}, \\
C_{n} &:= \{a_{n}...a_{1}: R_{-}(a_{n}...a_{1}x)>-\frac{\epsilon}{2}\}.
\end{align*}

Clearly, we have that $C_{n}\subseteq B_{n}$. \newline

As $V$ is a calibrated subaction, then $C_{n}$ is not empty. We
remark that $A_{n}\cup B_{n} = M^{n}$, and, by the Lemma above,
for each $a_{n-1}...a_{1}$, we have that $m\{a_{n}: a_{n}...a_{1}
\in C_{n}\}>\psi_{\epsilon/2}>0$.

Then, we get
\[\int_{A_{n}}e^{c_{i}R_{-}^{n}+\delta_{i}} da_{1}...da_{n}= \int_{A_{n}}e^{c_{i}R_{-} +\delta_{i}}e^{c_{i}R_{-}^{n-1}\circ\sigma  }da_{1}...da_{n}\]
\[\leq \int_{A_{n}}e^{-c_{i}\varepsilon + |\delta_{i}|_{\infty}}e^{c_{i}R_{-}^{n-1}\circ\sigma }da_{1}...da_{n}\]
\[= e^{-c_{i}\varepsilon + |\delta_{i}|_{\infty}} \int_{A_{n}}e^{c_{i}R_{-}^{n-1}\circ\sigma}da_{1}...da_{n}\]
\[\leq e^{-c_{i}\varepsilon + |\delta_{i}|_{\infty}}\int_{M^{n}}e^{c_{i}R_{-}^{n-1}\circ\sigma}da_{1}...da_{n}\]
\[= e^{-c_{i}\varepsilon + |\delta_{i}|_{\infty}}\int_{M^{n-1}}e^{c_{i}R_{-}^{n-1}}da_{1}...da_{n-1},\]

and,

\[\int_{B_{n}}e^{c_{i}R_{-}^{n}+\delta_{i}}da_{1}...da_{n} = \int_{B_{n}}e^{c_{i}R_{-}+\delta_{i}}e^{c_{i}R_{-}^{n-1}\circ\,\sigma}da_{1}...da_{n}\]
\[\geq \int_{C_{n}}e^{c_{i}R_{-}+\delta_{i}}e^{c_{i}R_{-}^{n-1}\circ\,\sigma}da_{1}...da_{n}\]
\[\geq \int_{C_{n}}  e^{-c_i \frac{\epsilon}{2} \,-|\delta_{i}|_{\infty}}e^{c_{i}R_{-}^{n-1}\circ\,\sigma}da_{1}...da_{n}\]
\[\geq e^{-c_i \frac{\epsilon}{2} -|\delta_{i}|_{\infty}}\int_{C_{n}}e^{c_{i}R_{-}^{n-1}\circ\,\sigma}da_{1}...da_{n}\]
\[\geq e^{-c_i \frac{\epsilon}{2} -|\delta_{i}|_{\infty}} \int_{M^{n-1}}\int_{\{a_{n}\,:a\,_{n}...a_{1} \in \,\, C_{n}\}}e^{c_{i}R_{-}^{n-1}\circ\,\sigma}da_{1}...da_{n}\]
\[= e^{-c_i \frac{\epsilon}{2} -|\delta_{i}|_{\infty}} \int_{M^{n-1}}e^{c_{i}R_{-}^{n-1}}\int_{\{a_{n}\,:\,a_{n}...a_{1} \in \,\, C_{n}\}}da_{1}...da_{n}\]
\[= e^{-c_i \frac{\epsilon}{2} -|\delta_{i}|_{\infty}} \psi_{\epsilon/2} \int_{M^{n-1}}e^{c_{i}R_{-}^{n-1}}da_{1}...da_{n-1}.\]

It follows that

\[0 \leq \liminf_{i,n\to\infty} \frac{\int_{A_{n}}e^{c_{i}R_{-}^{n}+\delta_{i}}da_{1}...da_{n}}{\int_{B_{n}}e^{c_{i}R_{-}^{n}+\delta_{i}}da_{1}...da_{n}} \]
\[\leq \limsup_{i,n\to\infty} \frac{\int_{A_{n}}e^{c_{i}R_{-}^{n}+\delta_{i}}da_{1}...da_{n}}{\int_{B_{n}}e^{c_{i}R_{-}^{n}+\delta_{i}}da_{1}...da_{n}} \]
\[\leq \limsup_{i,n\to\infty} \frac{e^{-c_{i}\varepsilon + |\delta_{i}|_{\infty}}}{\psi_{\epsilon/2} e^{-c_i \frac{\epsilon}{2}-|\delta_{i}|_{\infty}}}\]
\[\leq \limsup_{i,n \to\infty} e^{ -c_{i}\,\varepsilon/2\,+ 2|\delta_i|_{\infty}}\psi^{-1}_{\epsilon/2}\]
\[=\limsup_{i,n\to\infty} e^{c_{i}\, ( -\epsilon/2\,+  2 \frac{|\delta_i|_{\infty}}{c_{i}})}\psi^{-1}_{\epsilon/2} =0.\]

In the same way

\[0 \leq \liminf_{i,n\to\infty} \frac{\int_{A_{n}}e^{c_{i}R_{-}^{n}+\delta_{i}}R_{-}\,\,da_{1}...da_{n}}{\int_{B_{n}}e^{c_{i}R_{-}^{n}+
\delta_{i}}da_{1}...da_{n}(-\varepsilon)} \]
\[\leq \limsup_{i,n\to\infty} \frac{\int_{A_{n}}e^{c_{i}R_{-}^{n}+\delta_{i}}R_{-}\,\,da_{1}...da_{n}}{\int_{B_{n}}e^{c_{i}R_{-}^{n}+\delta_{i}}da_{1}...da_{n}(-\varepsilon)} \]
\[\leq \limsup_{i,n\to\infty} \frac{\int_{A_{n}}e^{c_{i}R_{-}^{n}+\delta_{i}}da_{1}...da_{n}\,(-|R_{-}|_{\infty})}{\int_{B_{n}}e^{c_{i}R_{-}^{n}+\delta_{i}}da_{1}...da_{n}(-\varepsilon)} \]
\[\leq \limsup_{i,n\to\infty} \frac{e^{-c_{i}\varepsilon + |\delta_{i}|_{\infty}}(-|R_{-}|_{\infty})}{\psi_{\epsilon/2} (-\varepsilon)e^{-c_i \, \epsilon/2\,-\,|\delta_{i}|_{\infty}}} = 0.\]

From the above,  and writing $\int_{M^{n}}da_{1}...da_{n} =
\int_{A_{n}}da_{1}...da_{n}+\int_{B_{n}}da_{1}...da_{n}$, we have:

\[\liminf_{c_{i},n\to\infty} L_{g_{c_{i}}}^{n}(R_{-})(x) = \liminf_{c_{i},n\to\infty}\frac{\int_{a_{n}...a_{1}}e^{c_{i}R_{-}^{n}(a_{n}...a_{1}x) + \delta_{i}(a_{n}...a_{1}x)  }R_{-}(a_{n}...a_{1}x) da_{1}...da_{n}}{\int_{a_{n}...a_{1}}e^{c_{i}R_{-}^{n}(a_{n}...a_{1}x) + \delta_{i}(a_{n}...a_{1}x) } da_{1}...da_{n}}\]
\[\geq \liminf_{c_{i},n\to\infty} \frac{\int_{A_{n}}e^{c_{i}R_{-}^{n} + \delta_{i}  }R_{-} da_{1}...da_{n}+ \int_{B_{n}}e^{c_{i}R_{-}^{n} + \delta_{i}  }da_{1}...da_{n}(-\varepsilon)}{\int_{A_{n}}e^{c_{i}R_{-}^{n} + \delta_{i}  }da_{1}...da_{n} + \int_{B_{n}}e^{c_{i}R_{-}^{n} + \delta_{i}  }da_{1}...da_{n}}\]
\[= \liminf_{c_{i},n\to\infty}\frac{\int_{B_{n}}e^{c_{i}R_{-}^{n} + \delta_{i}  }da_{1}...da_{n}(-\varepsilon)}{\int_{B_{n}}e^{c_{i}R_{-}^{n} + \delta_{i}  }da_{1}...da_{n}}\]
\[\geq -\varepsilon.\]
Taking $\varepsilon \to 0$, we get our claim.
\end{proof}

This ends the proof of our first main result. The bottom line is: a convergent subsequence of Gibbs states at
positive temperature selects maximizing probabilities (eventually, different  subsequences can localize different probabilities).

\section{On the Large Deviation Principle}

On this section we are going to prove Theorem $\ref{ldpruelle1}.$

We suppose that the maximizing measure for $f$ is unique, and we denote by $\mu_{\infty}$ the maximal one.
Under this assumption, two calibrated subactions differ by a
constant (this follows from proposition 5 in \cite{BLT}). In
particular the function $R_{+}:= \beta(f) + V\circ\sigma - V - f$ is
well defined. The function $R_{-}:=-R_{+}$ is the unique
accumulation point of $g_{c}/c$, on the uniform topology, so
$g_{c}/c \to R_{-}$ uniformly.

We denote by $|g|_{\theta}$ the Lipschitz constant of a Lipschitz function $g$.

\begin{lemma}
The function $R_{+}^{\infty}$ is lower semi-continuous.
\end{lemma}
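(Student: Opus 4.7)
The plan is to observe that $R_+^\infty$ is a supremum of a monotone family of continuous functions, and then invoke the standard fact that such a supremum is always lower semi-continuous.

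First I would verify that $R_+$ is a non-negative continuous function. Continuity is immediate from the definition $R_+ = \beta(f) + V\circ\sigma - V - f$, because $f$ is Hölder and $V$ is a (Hölder) calibrated subaction. Non-negativity follows directly from the calibrated subaction inequality in Definition \ref{sub}: for every $x$, taking $y = \sigma(x)$, one has $V(\sigma(x)) \geq f(x) + V(x) - \beta(f)$, i.e., $R_+(x) \geq 0$. Because each $\sigma^j$ is continuous, the composition $R_+ \circ \sigma^j$ is continuous and non-negative for every $j \geq 0$.

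Next I would set, for each $N \geq 1$, the partial sum
\[
S_N(x) := \sum_{j=0}^{N-1} R_+(\sigma^j(x)).
\]
Each $S_N$ is a finite sum of continuous functions, hence continuous. Since $R_+ \geq 0$, the sequence $(S_N)$ is pointwise monotone non-decreasing, so
\[
R_+^\infty(x) = \lim_{N\to\infty} S_N(x) = \sup_{N \geq 1} S_N(x),
\]
where the limit is allowed to take the value $+\infty$.

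Finally I would quote the elementary topological fact: the pointwise supremum of any family of continuous (or lower semi-continuous) real-valued functions, taking values in $\mathbb{R}\cup\{+\infty\}$, is itself lower semi-continuous. Indeed, for any $\alpha \in \mathbb{R}$, the sublevel set is
\[
\{x : R_+^\infty(x) \leq \alpha\} = \bigcap_{N \geq 1} \{x : S_N(x) \leq \alpha\},
\]
which is an intersection of closed sets (by continuity of each $S_N$), hence closed. Equivalently, $\{R_+^\infty > \alpha\}$ is open, which is exactly the lower semi-continuity of $R_+^\infty$. There is no real obstacle here; the key observation that makes everything work is the non-negativity $R_+ \geq 0$ supplied by the calibrated subaction inequality, which both ensures the partial sums are monotone and justifies passing to the supremum representation.
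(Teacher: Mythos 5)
Your proof is correct, and it takes a genuinely softer route than the paper's. Both arguments ultimately rest on the same two facts: $R_{+}\geq 0$ (from the calibrated subaction inequality), so that $R_{+}^{\infty}=\sup_{N}S_{N}$ is a monotone supremum of the continuous partial sums $S_{N}=\sum_{j=0}^{N-1}R_{+}\circ\sigma^{j}$, and the continuity of each $S_{N}$. But where you invoke the standard topological fact that a pointwise supremum of continuous functions is lower semi-continuous (verified via the closed sublevel sets $\{R_{+}^{\infty}\leq\alpha\}=\bigcap_{N}\{S_{N}\leq\alpha\}$), the paper proves this fact inline by a sequential argument: it takes $z_{j}\to z$, splits into the cases $R_{+}^{\infty}(z)=\infty$ and $0<R_{+}^{\infty}(z)<\infty$ (the case $R_{+}^{\infty}(z)=0$ being trivial), and in each case uses the quantitative H\"older estimate $R_{+}^{n}(z_{j})\geq R_{+}^{n}(z)-|R|_{\theta}\left(d(z_{j},z)+\dots+d(\sigma^{n-1}z_{j},\sigma^{n-1}z)\right)$ to transfer the lower bound from $z$ to nearby $z_{j}$, and then uses $R_{+}^{\infty}(z_{j})\geq R_{+}^{n}(z_{j})$, which is again non-negativity. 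Your version is shorter and more general: it needs only continuity of $R_{+}$, not H\"older regularity, and avoids the case analysis entirely. The paper's version yields explicit quantitative control (how close $z_{j}$ must be to $z$ in terms of $|R|_{\theta}$ and $\theta$), in the style of the estimates used elsewhere in the paper, but nothing downstream requires that extra information. You also correctly isolate the one nontrivial input from the dynamical setting, namely $R_{+}\geq 0$ coming from the calibrated subaction property, without which the supremum representation, and hence the lower semi-continuity, would fail.
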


\begin{proof}
We take $z, z_{j} \in X$, with $z_{j}\to z$. We will show
that
\[\liminf_{j\to\infty}R_{+}^{\infty}(z_{j}) \geq R_{+}^{\infty}(z).\]
In the case $R_{+}^{\infty}(z) =0$ the result is clearly true.

\textbf{First case:} $R_{+}^{\infty}(z) =\infty.$
\newline
Given $M>0$, let $n$ be such that $R_{+}^{n}(z) > 2M$. We fix
$n_{1}$, such that,
$\frac{M}{2^{n_1}|R_+|_{\theta}}<1$. Let $n_{0}$ be
such that for $j\geq n_{0}$, we have $d(z_{j},z) <
\frac{M}{2^{n_1}2^{n}|R_+|_{\theta}}$. Then, for
$j\geq n_{0}$:
\begin{align*}
R_{+}^{n}(z_{j}) &\geq R_{+}^{n}(z) -|R_+|_{\theta}(d(z_{j},z)+...+d(\sigma^{n-1}(z_{j}),\sigma^{n-1}z))\\
&\geq 2M - |R_+|_{\theta}(\frac{M}{2^{n_1}|R_+|_{\theta}}) \geq M.
\end{align*}
It follows that
\[\liminf_{j\to\infty}R_{+}^{\infty}(z_{j}) \geq M.\]
Taking $M\to +\infty$, we get
\[\liminf_{j\to\infty}R_{+}^{\infty}(z_{j}) = +\infty .\]

\textbf{Second case:} $R_{+}^{\infty}(z) = M>0$.
\newline
Fixed $\varepsilon >0$, there exist $n$, such that, $R_{+}^{n}(z) >
M-\varepsilon/2$. Let $n_{0}$ be such that for $j\geq n_{0}$, then we
have $d(z_{j},z) <
\frac{\varepsilon}{2^{n+2}|R_+|_{\theta}}$. Therefore, for
$j\geq n_{0}$:
\[R_{+}^{n}(z_{j})  \geq (M-\varepsilon/2) - |R_+|_{\theta}\left(\frac{\varepsilon}{2|R_+|_{\theta}}\right) = M-\varepsilon.\]
Therefore,  we get
\[\liminf_{j\to\infty}R_{+}^{\infty}(z_{j}) \geq M-\varepsilon.\]
Taking $\varepsilon\to 0$:
\[\liminf_{j\to\infty}R_{+}^{\infty}(z_{j}) \geq M .\]
\end{proof}

\bigskip

\bigskip

We note that in \cite{BCLMS} it is proved that
$\frac{1}{c}\log(\beta_{c}) \to \beta(f)$. We denote
$\varepsilon_{c} = \log(\beta_{c})-c\beta(f)$. Then, we have
$\frac{\varepsilon_{c}}{c}\to 0$.

\begin{lemma}
\[\lim_{c,n\to\infty} \left(\frac{1}{c}\log((L_{cR_-}^{n}1)(x)) - \frac{n.\varepsilon_{c}}{c}\right) = 0,\]
in particular, for a fixed $k$:
 \[\lim_{c,n\to\infty} \frac{1}{c}\log((L_{cR_-}^{n}1)(x)) - \frac{1}{c}\log((L_{cR_-}^{n+k}1)(x)) = 0.\]
\end{lemma}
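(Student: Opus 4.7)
The plan is to exploit the telescoping cocycle relation between $g_{c}$ and $cR$, combined with the normalization $L_{g_{c}}^{n}1\equiv 1$.

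Set $\delta_{c}:=\log h_{c}-cV$, so that the hypothesis $\frac{1}{c}\log h_{c}\to V$ uniformly gives $|\delta_{c}|_{\infty}/c\to 0$. A direct substitution into the definition $g_{c}=cf+\log h_{c}-\log h_{c}\circ\sigma-\log\beta_{c}$ and into $cR=cf+cV-cV\circ\sigma-c\beta(f)$ yields the pointwise identity
\[
g_{c}=cR+\delta_{c}-\delta_{c}\circ\sigma-\varepsilon_{c}.
\]
Iterating and using that the middle terms telescope, I would then obtain, for any $z$,
\[
g_{c}^{n}(z)=cR^{n}(z)+\delta_{c}(z)-\delta_{c}(\sigma^{n}z)-n\varepsilon_{c}.
\]

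Now apply this with $z=a_{n}\cdots a_{1}x$, so that $\sigma^{n}z=x$ and $\delta_{c}(\sigma^{n}z)=\delta_{c}(x)$ does not depend on the $a_{i}$'s. Since $L_{g_{c}}^{n}1\equiv 1$, integrating $e^{g_{c}^{n}}$ against $da_{1}\cdots da_{n}$ gives
\[
1\;=\;e^{-\delta_{c}(x)-n\varepsilon_{c}}\int_{M^{n}}e^{cR^{n}(a_{n}\cdots a_{1}x)+\delta_{c}(a_{n}\cdots a_{1}x)}\,da_{1}\cdots da_{n}.
\]
Bounding $\delta_{c}(a_{n}\cdots a_{1}x)$ from above and below by $\pm|\delta_{c}|_{\infty}$ and extracting the factor out of the integral produces
\[
e^{\delta_{c}(x)+n\varepsilon_{c}-|\delta_{c}|_{\infty}}\;\leq\;(L_{cR}^{n}1)(x)\;\leq\;e^{\delta_{c}(x)+n\varepsilon_{c}+|\delta_{c}|_{\infty}}.
\]
Taking $\frac{1}{c}\log$ of both sides and subtracting $\frac{n\varepsilon_{c}}{c}$, the difference is squeezed between $\frac{\delta_{c}(x)-|\delta_{c}|_{\infty}}{c}$ and $\frac{\delta_{c}(x)+|\delta_{c}|_{\infty}}{c}$, both of which tend to $0$ as $c\to\infty$, uniformly in $n$. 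This gives the first displayed equality.

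For the ``in particular'' statement, with $k$ fixed one writes
\[
\tfrac{1}{c}\log(L_{cR}^{n}1)(x)-\tfrac{1}{c}\log(L_{cR}^{n+k}1)(x)=\Bigl[\tfrac{1}{c}\log(L_{cR}^{n}1)(x)-\tfrac{n\varepsilon_{c}}{c}\Bigr]-\Bigl[\tfrac{1}{c}\log(L_{cR}^{n+k}1)(x)-\tfrac{(n+k)\varepsilon_{c}}{c}\Bigr]-\tfrac{k\varepsilon_{c}}{c},
\]
each bracket going to $0$ by the first part and the last term going to $0$ because $k$ is fixed and $\varepsilon_{c}/c\to 0$. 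There is no real obstacle here; the only point that requires care is making sure the bound on $|\delta_{c}|_{\infty}/c$ is genuinely uniform (which follows from the equicontinuity statement of \cite{BCLMS} and the uniqueness hypothesis, ensuring $\tfrac{1}{c}\log h_{c}\to V$ uniformly rather than merely along a subsequence).
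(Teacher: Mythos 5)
Your computational core---the cocycle identity $g_{c}=cR+\delta_{c}-\delta_{c}\circ\sigma-\varepsilon_{c}$, its telescoped form, and the squeeze coming from $L_{g_{c}}^{n}1\equiv 1$---is exactly the computation in the paper's own proof, just organized as a two-sided bound rather than as an evaluation of limits. The gap is in your very first line, the one you flag at the end and then dismiss too quickly: you assume $\frac{1}{c}\log h_{c}\to V$ uniformly, for one fixed $V$, and claim this follows from the equicontinuity statement of \cite{BCLMS} plus uniqueness of the maximizing measure. It does not. Those two facts give only that every subsequence of $\frac{1}{c}\log h_{c}$ admits a uniformly convergent sub-subsequence, and that every such limit is a calibrated subaction; uniqueness of the maximizing probability makes calibrated subactions unique \emph{up to an additive constant}. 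Nothing prevents different subsequences from selecting different additive constants, so convergence of the full family $\frac{1}{c}\log h_{c}$ is not established, and hence $|\delta_{c}|_{\infty}/c\to 0$ (with $\delta_{c}=\log h_{c}-cV$ for your fixed $V$) is unjustified. The paper is careful on precisely this point: it only ever asserts $g_{c}/c\to R_{-}$ uniformly, a statement in which the additive constants cancel, and never that $\frac{1}{c}\log h_{c}$ itself converges.

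The step can be repaired in two ways, and this repair is where the real content of the paper's proof lies. First, the paper's route: take an arbitrary accumulation point $a$ of $\frac{1}{c}\log((L_{cR}^{n}1)(x))-\frac{n\varepsilon_{c}}{c}$, realized along sequences $c_{j},n_{j}\to\infty$; extract a further subsequence along which $\frac{1}{c_{i}}\log h_{c_{i}}$ converges uniformly to \emph{some} calibrated subaction $V$; since any two calibrated subactions differ by a constant, the function $R$ is the same no matter which one defines it, so your identity holds along this subsequence with $|\delta_{c_{i}}|_{\infty}/c_{i}\to 0$, and your squeeze shows $a=0$. Second, a patch internal to your own argument: observe that only the difference $\delta_{c}(z)-\delta_{c}(x)$ ever enters, since your identity can be rewritten as $\int_{\sigma^{n}(z)=x}e^{cR^{n}(z)+\delta_{c}(z)-\delta_{c}(x)}\,dm=e^{n\varepsilon_{c}}$; thus only the oscillation of $\delta_{c}$, not its sup norm, needs to be $o(c)$, and convergence of $\frac{1}{c}\log h_{c}$ \emph{up to additive constants} (equivalently, uniform convergence of $\frac{1}{c}\log h_{c}-\frac{1}{c}\log h_{c}(x_{0})$ to the unique calibrated subaction vanishing at $x_{0}$) does follow from equicontinuity, Arzel\`a--Ascoli and uniqueness. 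Either patch makes your proof correct; as written, it rests on a selection-of-subaction statement stronger than what is available.
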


\begin{proof}
Let $a$ be an accumulation point of  $\frac{1}{c}\log((L_{cR_-}^{n}1)(x))
- \frac{n.\varepsilon_{c}}{c}$, when $c,n \to\infty$. Then, there
exists $c_{j},n_{j} \to \infty$, such that,
\[\lim_{j\to\infty}\left(\frac{1}{c_{j}}\log((L_{c_{j}R_-}^{n_{j}}1)(x)) -
\frac{n_{j}.\varepsilon_{c_{j}}}{c_{j}}\right) = a.\]

Following \cite{BCLMS} we can take a subsequence $\{j_{i}\}$ such
that $\displaystyle{ \frac{1}{c_{j_{i}}}\log(h_{c_{j_{i}}})}$
converges uniformly to a calibrated subaction $V$. So there exist
sequences $c_{i},n_{i}\to\infty$ such that:
\[\lim_{i\to\infty}\left(\frac{1}{c_{i}}\log((L_{c_{i}R_-}^{n_{i}}1)(x)) -
\frac{n_{i}.\varepsilon_{c_{i}}}{c_{i}}\right) = a ,\, \text{ and } ,\,
\lim_{i\to\infty}\frac{1}{c_{i}}\log(h_{c_{i}}) = V.\] %
Denoting $\log(h_{c_{i}}) = c_{i}V + \delta_{c_{i}}$ where $|\delta_{c_{i}}|_{\infty}/c_{i} \to 0$, we have:
\begin{align*}
0 &= \lim_{i\to\infty} \frac{1}{c_{i}}\log((L_{g_{c_{i}}}^{n_{i}}1)(x))\\
&= \lim_{i\to\infty} \frac{1}{c_{i}}\log(\int_{\sigma^{n_{i}}(z)=x}e^{c_{i}f^{n_{i}}(z)+ \log(h_{c_{i}}(z)) - \log(h_{c_{i}}(x)) - n_{i}\log(\beta_{c_{i}})}\, dm ) \\
&= \lim_{i\to\infty} \frac{1}{c_{i}}\log(\int_{\sigma^{n_{i}}(z)=x}e^{c_{i}f^{n_{i}}(z)+ c_{i}V(z) - c_{i}V(x) - n_{i}c_{i}\beta(f) +\delta_{c_{i}}(z)-\delta_{c_{i}}(x) -n_{i}\varepsilon_{c_{i}}}\, dm) \\
&= \lim_{i\to\infty} \frac{1}{c_{i}}\log(\int_{\sigma^{n_{i}}(z)=x}e^{c_{i}R_-^{n_{i}}(z) +\delta_{c_{i}}(z)-\delta_{c_{i}}(x) -n_{i}\varepsilon_{c_{i}}}\, dm) \\
&=\lim_{i\to\infty} \frac{1}{c_{i}}\log(\int_{\sigma^{n_{i}}(z)=x}e^{c_{i}R_-^{n_{i}}(z) - n_{i}\varepsilon_{c_{i}}}\, dm) \\
&=\lim_{i\to\infty} \left(\frac{1}{c_{i}}\log(\int_{\sigma^{n_{i}}(z)=x}e^{c_{i}R_-^{n_{i}}(z)}\, dm) - \frac{n_{i}\varepsilon_{c_{i}}}{c_{i}}\right) = a.
\end{align*}
This shows that any accumulation point have to be equal to zero.
\end{proof}

\begin{center}
\textbf{The first inequality of Theorem \ref{ldpruelle1}}
\end{center}

\begin{proposition}\label{primeiroldp} For any closed set $F\subseteq X$
 \[\limsup_{c,n \to \infty} \frac{1}{c}\log((L_{g_{c}}^{n}\chi_{F})(x)) \leq  \sup_{z\in F}R_{-}^{\infty}(z)=-\inf_{x\in F}R_{+}^{\infty}(x).\]
 \end{proposition}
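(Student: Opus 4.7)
The plan is to unwind the normalization $g_c = cf + \log h_c - \log h_c \circ \sigma - \log \beta_c$ so as to express $L_{g_c}^n \chi_F(x)$ in terms of the Birkhoff sum $R^n$ of $R := R_-$, and then to separate the constraint $z \in F$ from the constraint $\sigma^n z = x$ by exploiting the inequality $R \le 0$. The final step will be to pass to the limit in an auxiliary truncation parameter $k$.

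First I would write $\log h_c = cV + \delta_c$ and $\log \beta_c = c\beta(f) + \varepsilon_c$, where the uniqueness hypothesis (which makes $V$ unique up to an additive constant) together with the equicontinuity of $\frac{1}{c}\log h_c$ gives uniform convergence $\frac{1}{c}\log h_c \to V$ after normalization, so that $|\delta_c|_\infty/c \to 0$ and $\varepsilon_c/c \to 0$. A direct computation using $\sigma^n z = x$ gives $g_c^n(z) = cR^n(z) + \delta_c(z) - \delta_c(x) - n\varepsilon_c$, and hence
\[
\frac{1}{c}\log L_{g_c}^n \chi_F(x) \;=\; \frac{1}{c}\log \int_{\sigma^n z = x,\, z \in F} e^{cR^n(z)}\,dm(z) \;-\; \frac{n\varepsilon_c}{c} \;+\; o(1),
\]
where the $o(1)$ absorbs the $\delta_c$ terms and tends to $0$ as $c\to\infty$.

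Next, for any fixed $k \geq 0$ and $n \geq k$, I would decompose $R^n(z) = R^k(z) + R^{n-k}(\sigma^k z)$ and use $R \leq 0$ together with the bound $R^k(z) \leq S_k := \sup_{w \in F} R^k(w)$, valid on $F$. Pulling $e^{cS_k}$ out, dropping the $F$-restriction on the remaining integral, and integrating out the last $k$ coordinates (so the $M^k$-factor equals $1$) yields
\[
\int_{\sigma^n z = x,\, z \in F} e^{cR^n(z)}\,dm(z) \;\leq\; e^{cS_k}\, L_{cR}^{n-k}(1)(x).
\]
Taking $\frac{1}{c}\log$ and applying the preceding lemma, which asserts $\frac{1}{c}\log L_{cR}^{n-k}(1)(x) - \frac{(n-k)\varepsilon_c}{c} \to 0$, while observing that $k\varepsilon_c/c \to 0$ for fixed $k$, the terms in $\frac{n\varepsilon_c}{c}$ cancel and I obtain, for every fixed $k$,
\[
\limsup_{c,n \to \infty}\frac{1}{c}\log L_{g_c}^n \chi_F(x) \;\leq\; S_k.
\]

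Finally I would let $k \to \infty$. Since $R \leq 0$, the continuous functions $R^k$ decrease pointwise to $R^\infty$ on the compact set $F$, so one expects $S_k \downarrow \sup_{z \in F} R^\infty(z)$. The main obstacle is precisely this exchange of a decreasing pointwise limit with a supremum, which does not follow from Dini's theorem because $R^\infty$ may take the value $-\infty$ on parts of $F$ and so is only lower semicontinuous. I would settle it by contradiction: if $\lim_k S_k > \sup_F R^\infty + \delta$, pick near-maximizers $z_k \in F$ with $R^k(z_k) > \sup_F R^\infty + \delta$, extract a convergent subsequence $z_k \to z^* \in F$, and use continuity of each $R^j$ with monotonicity in $k$ to force $R^j(z^*) \geq \sup_F R^\infty + \delta$ for every $j$, hence $R^\infty(z^*) \geq \sup_F R^\infty + \delta$, a contradiction.
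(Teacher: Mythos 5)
Your proposal is correct and takes essentially the same route as the paper: both unwind the normalization into Birkhoff sums of $R_-$ (using $|\delta_c|_\infty/c \to 0$, $\varepsilon_c/c \to 0$ and the lemma on $\frac{1}{c}\log L_{cR}^n 1(x)$), both split off the first $k$ symbols and bound them by $\sup_{z\in F} R_-^k(z)$, and both handle the final exchange of $k\to\infty$ with the supremum by a compactness argument (the paper via nested closed sets $Y_k$ with the finite intersection property, you via a convergent subsequence of near-maximizers — the same idea in different clothing). Two cosmetic slips that do not affect validity: $R_-^\infty$ is \emph{upper} (not lower) semicontinuous, and your closing contradiction should be run with an arbitrary real level $L > \sup_{z\in F} R_-^\infty(z)$ rather than $\sup_{z\in F} R_-^\infty(z) + \delta$, so that the case $\sup_{z\in F} R_-^\infty(z) = -\infty$ is also covered.
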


\begin{proof}

For a fixed $k$, we have that
\begin{align*}
&\limsup_{c,n\to\infty} \frac{1}{c}\log((L_{g_{c}}^{n+k}\chi_F)(x)) = \limsup_{c,n\to\infty} \frac{1}{c}\log(\frac{(L_{g_{c}}^{n+k}\chi_F)(x)}{(L_{g_{c}}^{n+k})(x)})\\
&=  \limsup_{c,n\to\infty} \frac{1}{c}\log(\frac{\int_{\sigma^{n+k}(z)=x}e^{cR_{-}^{n+k}(z) }\chi_F(z)\, dm}{ \int_{\sigma^{n+k}(z)=x}e^{cR_{-}^{n+k}(z)} \, dm}) \\
&= \limsup_{c,n\to\infty} \frac{1}{c}\log(\frac{\int_{\sigma^{n+k}(z)=x}e^{cR_{-}^{n+k}(z) }\chi_F(z)\, dm}{ \int_{\sigma^{n}(y)=x}e^{cR_{-}^{n}(y)} \, dm}) \\
&= \limsup_{c,n\to\infty} \frac{1}{c}\log(\frac{\int_{\sigma^{n}(y)=x}(e^{cR_{-}^{n}(y) }\int_{\sigma^{k}(z)=y}e^{cR_{-}^{k}(z)}\chi_{F}(z)\, dm)\, dm}{ \int_{\sigma^{n}(y)=x}e^{cR_{-}^{n}(y)}\, dm } ).
\end{align*}
Note, however that
\[\int_{\sigma^{k}(z)=y}e^{cR_{-}^{k}(z)}\chi_F(z) \, dm \leq e^{c\sup_{z\in F}R_{-}^{k}(z)},\]
then,
\[\limsup_{c,n\to\infty} \frac{1}{c}\log((L_{g_{c}}^{n}\chi_F)(x)) \leq  \limsup_{c,n\to\infty} \left( \sup_{z\in F}R_{-}^{k}(z)\right) = \sup_{z\in F}R_{-}^{k}(z).\]

For each $k$ fixed, we have that $R_{-}^{k}$ is a continuous
function, and $F\subset X$ is a compact set, then, there exist
$y_{k}\in F$, such that, $\sup_{z\in F}R_{-}^{k}(z) =
R_{-}^{k}(y_{k})$. Define
\[Y_{k}:= \{y \in F: \limsup_{c,n\to\infty} \frac{1}{c}\log((L_{g_{c}}^{n}\chi_F)(x)) \leq R_{-}^{k}(y)\}.\]
Then, $Y_{k}$ is closed (because $R_{-}^{k}$ is a continuous function) and not empty (because $y_{k}\in Y_{k}$). Using the fact that $R_{-} \leq 0$ we have
\[Y_{1} \supseteq Y_{2}\supseteq ...\]
These sets are closed and not empty, then there exist some $x_{0}
\in \bigcap_{k\geq 1}Y_{k}$. So, for each $k$:
\[\limsup_{c,n\to\infty} \frac{1}{c}\log((L_{g_{c}}^{n}\chi_F)(x)) \leq R_{-}^{k}(x_{0}).\]
Using the fact that $R_{-}^{k}(x_{0})\to R_{-}^{\infty}(x_{0})$, we
conclude that
\[\limsup_{c,n\to\infty} \frac{1}{c}\log((L_{g_{c}}^{n}\chi_F)(x)) \leq R_{-}^{\infty}(x_{0}) \leq \sup_{z\in F}R_{-}^{\infty}(z).\]
\end{proof}

\bigskip

\begin{center}\textbf{The second inequality of Theorem \ref{ldpruelle1}}
\end{center}

Suppose that $A$ is an open set. So, there exists $n_{0}$, such that, for
$n\geq n_{0}$, and, $x\in X$, there exists $y \in A$, such that,
$\sigma^{n}(y)=x$. More precisely, given $y=y_{1}y_{2}...$ in $A$,
let $\epsilon>0$, such that, $B(y,\epsilon)\subset A$. Now let $n_{0}$
such that $\frac{1}{2^{n_0}}<\epsilon$. If $z\in X$
coincide with $y_{1}...y_{n_{0}}$ in its first symbols, then $d(z,y)\leq \frac{1}{2^{n_{0}+1}}+\frac{1}{2^{n_{0}+2}}+...=
\frac{1}{2^{n_{0}}}<\epsilon$, and, so $z\in A$. We
conclude that given $x\in X$, we can get an  $y_{1}...y_{n_{0}}x \in
A$.

\begin{lemma} \label{maiorque}
There exist $y_{0}\in X$ such that
 \[\liminf_{c,n \to \infty} \frac{1}{c}\log((L_{g_{c}}^{n}\chi_A)(x)) \geq  \limsup_{k\to\infty} \left(\sup_{z\in A,\, \sigma^{k}(z)=y_{0}}R_{-}^{k}(z)\right).\]
  \end{lemma}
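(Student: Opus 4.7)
The plan is to decompose $L_{g_c}^{n} = L_{g_c}^{N}\circ L_{g_c}^{k}$ for $n = N+k$, and to exploit a careful choice of $y_0$ together with the previous lemma. Using $\log h_c = cV + \delta_c$ (with $|\delta_c|_\infty/c \to 0$) and $\log\beta_c = c\beta(f) + \varepsilon_c$ (with $\varepsilon_c/c\to 0$), I first rewrite, in the style of the preceding computations,
\[L_{g_c}^{N+k}\chi_A(x) = e^{-\delta_c(x) - (N+k)\varepsilon_c}\int_{\sigma^N y = x} e^{cR_-^N(y)}\,J_{k,c}(y)\, dm,\]
where $J_{k,c}(y) := \int_{\sigma^k z = y,\, z \in A} e^{cR_-^k(z) + \delta_c(z)}\, dm$ is the inner capacity. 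Writing $T_k(y_0) := \sup_{z\in A,\,\sigma^k z = y_0} R_-^k(z)$, the goal becomes to find $y_0$ so that $J_{k,c}$ is bounded below by $e^{c(T_k(y_0)-\epsilon)+o(c)}$ on a neighborhood of $y_0$, and so that the outer integral restricted to this neighborhood matches $L_{cR}^N 1(x)$ on the $\frac{1}{c}\log$ scale.

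To construct $y_0$, for each $k$ pick $z_k \in A$ with $R_-^k(z_k) \geq \sup_{z\in A} R_-^k(z) - 1/k$, set $y_k = \sigma^k z_k$, and pass by compactness of $X$ to a convergent subsequence $y_{k_j} \to y_0$. Replacing the tail $y_{k_j}$ of $z_{k_j}$ by $y_0$ produces $\tilde z_{k_j}$ with $\sigma^{k_j}\tilde z_{k_j} = y_0$ and identical first $k_j$ coordinates; hence $\tilde z_{k_j}\in A$ for $j$ large (openness), and $|R_-^{k_j}(\tilde z_{k_j}) - R_-^{k_j}(z_{k_j})|$ is bounded by a geometric Hölder tail of the form $[R_-]_\theta \, d(y_0,y_{k_j})^\alpha \sum_i \theta^{i\alpha}$, which tends to $0$ along the subsequence. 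Thus $T_{k_j}(y_0) \geq R_-^{k_j}(\tilde z_{k_j})$, and this $y_0$ guarantees $\limsup_k T_k(y_0)$ attains the target value.

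For the inner estimate, fix $k$ and $\epsilon > 0$, choose $z^*\in A$ with $\sigma^k z^* = y_0$ and $R_-^k(z^*) > T_k(y_0) - \epsilon$, and use continuity of $R_-^k$ together with openness of $A$ to find a product-type open set $O\subseteq M^k$ containing the first $k$ coordinates of $z^*$ and a ball $W = B(y_0,\eta)$ such that $a_1\cdots a_k y\in A$ and $R_-^k(a_1\cdots a_k y) > T_k(y_0) - 2\epsilon$ for $(a_1,\ldots,a_k)\in O$, $y\in W$. This yields the uniform bound $J_{k,c}(y) \geq m^k(O)\, e^{c(T_k(y_0) - 2\epsilon) - |\delta_c|_\infty}$ for $y\in W$, so
\[L_{g_c}^{N+k}\chi_A(x) \geq m^k(O)\, e^{-\delta_c(x) - (N+k)\varepsilon_c - |\delta_c|_\infty + c(T_k(y_0) - 2\epsilon)} \int_{\sigma^N y = x,\, y\in W} e^{cR_-^N(y)}\, dm.\]

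The hard part is the last integral: I must show that
\[\liminf_{c,N\to\infty}\frac{1}{c}\log \int_{\sigma^N y = x,\, y\in W} e^{cR_-^N(y)}\, dm - \frac{N\varepsilon_c}{c} \geq 0,\]
which by the previous lemma is equivalent to saying the restriction to $W$ loses only $o(c)$ in the exponent. I plan to handle this by recognizing the integral as $L_{cR}^N\chi_W(x) = e^{-cV(x) - Nc\beta(f)}\,L_{cf}^N(e^{cV}\chi_W)(x)$ and using the spectral asymptotics $L_{cf}^N\phi / \beta_c^N \to h_c\cdot \nu_c(\phi)$, so that $L_{cR}^N\chi_W(x)/L_{cR}^N 1(x) \to \mu_c(W)$ up to bounded multiplicative errors; then weak-$*$ convergence $\mu_c \to \mu_\infty$ gives $\mu_c(W) \to \mu_\infty(W) > 0$ \emph{provided} $y_0$ lies in $\operatorname{supp}(\mu_\infty)$. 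Arranging this is the main obstacle: I would achieve it by refining the choice of $(z_k)$ to satisfy $R_-^\infty(z_k)\to \sup_{z\in A} R_-^\infty(z)$, so that the near-calibrated endpoints $y_k = \sigma^k z_k$ are forced to accumulate on $\operatorname{supp}(\mu_\infty)$ by the attracting nature of the Aubry set under uniqueness of $\mu_\infty$. Once all pieces are combined, $\liminf_{c,n\to\infty}\frac{1}{c}\log L_{g_c}^n\chi_A(x) \geq T_k(y_0) - 2\epsilon$ for every fixed $k$, and letting $\epsilon \to 0$ and taking $\limsup$ over $k$ gives the lemma.
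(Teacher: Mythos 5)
Your algebra and your inner estimate are sound and parallel the paper: the decomposition $L_{g_c}^{N+k}=L_{g_c}^{N}\circ L_{g_c}^{k}$, the rewriting through $R_-$, $\delta_c$, $\varepsilon_c$, and the bound $J_{k,c}(y)\geq m^k(O)\,e^{c(T_k(y_0)-2\epsilon)-|\delta_c|_\infty}$ for $y$ near $y_0$ all match the paper's computations. The divergence is in the outer integral, and this is where the proposal breaks. The paper never localizes the outer integral at all: since, up to $e^{o(c)}$ factors (controlled by the lemma on $\tfrac1c\log L_{cR}^{n}1(x)-n\varepsilon_c/c$), the quantity $L_{g_c}^{n+k}\chi_A(x)$ is a ratio whose numerator and denominator carry the \emph{same} outer weight $e^{cR_-^{n}(y)}$, it is trivially bounded below by $\inf_{y\in X}J_{k,c}(y)$, an inequality valid for every finite $c$ and $n$. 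The point $y_0$ is then obtained as an accumulation point of near-\emph{minimizers} $y_{c,k}$ of $J_{k,c}$, and the estimate at $y_{c,k}$ is transferred to preimages of $y_0$ by H\"older continuity; neither $\mu_c$, nor $\operatorname{supp}(\mu_\infty)$, nor any $n\to\infty$ asymptotics enter this lemma.

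Your route instead requires $\liminf_{c,N\to\infty}\bigl[\tfrac1c\log\int_{\sigma^N y=x,\,y\in W}e^{cR_-^N(y)}\,dm-\tfrac{N\varepsilon_c}{c}\bigr]\geq 0$, i.e.\ $L_{g_c}^{N}\chi_W(x)\geq e^{-\epsilon c}$ for all large pairs $(c,N)$, and your plan for this has two genuine holes. First, the reduction to $\mu_c(W)\to\mu_\infty(W)>0$ needs $y_0\in\operatorname{supp}(\mu_\infty)$, but your construction does not deliver it: choosing $z_k\in A$ with $R_-^\infty(z_k)\to\sup_A R_-^\infty$ and letting $y_0$ be a limit of $y_k=\sigma^k(z_k)$ gives, via lower semicontinuity of $R_+^\infty$, only $R_+^\infty(y_0)<\infty$. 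Points with finite, even zero, deviation value need not lie in the support: a calibrated preimage of a support point that falls outside the support has $R_+^\infty=0$, and such points are exactly what Lemma \ref{acumula} manufactures. There is no ``attraction to the Aubry set'' theorem available for the endpoints of \emph{varying} orbits $z_k$; Lemma \ref{LMRT} and its corollary say that support points are \emph{among} the accumulation points of a single orbit with $R_-^\infty>-\infty$, not that every accumulation point of $\sigma^k(z_k)$ lies in the support. Second, even granting $\mu_\infty(W)>0$, the identification $L_{g_c}^{N}\chi_W(x)\approx\mu_c(W)$ holds as $N\to\infty$ at \emph{fixed} $c$, with a rate that may degenerate as $c\to\infty$ (no uniform spectral gap for $L_{g_c}$); the paper's double limit $\liminf_{c,n\to\infty}$ demands the bound for all pairs with both indices large, so this order-of-limits issue is not cosmetic — your argument would only prove the iterated limit $\liminf_{c}\liminf_{N}$. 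A partial repair inside your framework is to take $y_0$ to be any point of $\operatorname{supp}(\mu_\infty)$ outright (the paper's final lemma shows $\limsup_k T_k(y_0)\geq\sup_A R_-^\infty$ for an arbitrary $y_0$, using Lemma \ref{acumula}), but the uniformity problem would remain; the paper's infimum trick is precisely what makes both difficulties disappear.
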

\begin{proof} For a fixed $k\geq n_{0}$, we have
\[\liminf_{c,n\to\infty} \frac{1}{c}\log((L_{g_{c}}^{n+k}\chi_A)(x)) = \liminf_{c,n\to\infty} \frac{1}{c}\log(\frac{\int_{\sigma^{n}(y)=x}e^{cR_{-}^{n}(y) }(\int_{\sigma^{k}(z)=y}e^{cR_{-}^{k}(z)}\chi_A(z)\, dm)\, dm}{ \int_{\sigma^{n}(y)=x}e^{cR_{-}^{n}(y)} \, dm} )\]
\[\geq \liminf_{c,n\to\infty} \frac{1}{c}\log(\inf_{y\in X}\int_{\sigma^{k}(z)=y}e^{cR_{-}^{k}(z)}\chi_A(z)\, dm)\]
\[= \liminf_{c\to\infty} \inf_{y\in X}\frac{1}{c}\log(\int_{\sigma^{k}(z)=y}e^{cR_{-}^{k}(z)}\chi_A(z)\, dm).\]
Then, we get
\[\liminf_{c,n\to\infty} \frac{1}{c}\log((L_{g_{c}}^{n}\chi_A)(x)) \geq \limsup_{k\to\infty}\liminf_{c\to\infty}\inf_{y\in X}\frac{1}{c}\log(\int_{\sigma^{k}(z)=y}e^{cR_{-}^{k}(z)}\chi_A(z)\, dm).\]

Let $y_{c,k}$ be such that
\[\inf_{y\in X}\frac{1}{c}\log(\int_{\sigma^{k}(z)=y}e^{cR_{-}^{k}(z)}\chi_A(z)\, dm) > \frac{1}{c}\log(\int_{\sigma^{k}(z)=y_{c,k}}e^{cR_{-}^{k}(z)}\chi_A(z)\, dm) -\frac{1}{k}.\]

 Then, we have:
\[\liminf_{c,n\to\infty} \frac{1}{c}\log((L_{g_{c}}^{n}\chi_A)(x)) \geq \limsup_{k\to\infty}\liminf_{c\to\infty}\frac{1}{c}\log(\int_{\sigma^{k}(z)=y_{c,k}}e^{cR_{-}^{k}(z)}\chi_A(z) \, dm)-\frac{1}{k}\]
\begin{equation}
= \limsup_{k\to\infty}\liminf_{c\to\infty}\frac{1}{c}\log(\int_{\sigma^{k}(z)=y_{c,k}}e^{cR_{-}^{k}(z)}\chi_A(z) \, dm). \label{3.1}
\end{equation}

As $X$ is a compact set, let $y_{0}$ be an accumulation point of
$y_{c,k}$, when $c,k\to\infty$. For $k$ sufficiently large, let $z_{k} \in A$, such that:
$\sigma^{k}(z_{k})=y_{0}$, and
\[R_{-}^{k}(z_{k}) > \sup_{z\in A,\, \sigma^{k}(z)=y_{0}}R_{-}^{k}(z) - \frac{1}{2k}.\]
We denote $z_{k}:=x_{k}...x_{1}y_{0}, \, x_{i}\in M$. We can take $\epsilon>0$ sufficiently small such that the ball $A_{k,\epsilon}:=\{a_{k}...a_{1}\in M^{k}: |(a_{k}...a_{1})-(x_{k}...x_{1})| \leq \epsilon \} $  satisfies:\newline
1. $a_{k}...a_{1}y_{0} \in A$, \newline
2. $a_{k}...a_{1}y_{c,k} \in A$, for $k,c>>0$, \newline
3. $R_{-}^{k}(a_{k}...a_{1}y_{0}) > \sup_{z\in A,\, \sigma^{k}(z)=y_{0}}R_{-}^{k}(z) - \frac{1}{k}$. \newline

Then, we get
\[ \limsup_{k\to\infty}\liminf_{c\to\infty}\frac{1}{c}\log(\int_{A_{k,\epsilon}}e^{cR_{-}^{k}(a_{k}...a_{1}y_{0})} \, dm)\]
\[\geq  \limsup_{k\to\infty}\liminf_{c\to\infty}\left(\sup_{z\in A,\, \sigma^{k}(z)=y_{0}}R_{-}^{k}(z) - \frac{1}{k} + \frac{1}{c}\log(m(A_{k,\epsilon}))\right) \]

\begin{equation}
= \limsup_{k\to\infty} (\sup_{z\in A,\, \sigma^{k}(z)=y_{0}}R_{-}^{k}(z)). \label{3.2}
\end{equation}
On the other hand, on $A_{k,\epsilon}$ we have:

\[R_{-}^{k}(a_{k}...a_{1}y_{c,k}) \geq R_{-}^{k}(a_{k}...a_{1}y_{0}) - |R_{-}|_{\theta}(\frac{1}{2} +\frac{1}{2^{2}} +...+ \frac{1}{2^{k}})d(y_{c,k},y_{0}) \]
\[\geq R_{-}^{k}(a_{k}...a_{1}y_{0}) - |R_{-}|_{\theta}d(y_{c,k},y_{0}).\]
Then, we get
\[\limsup_{k\to\infty}\liminf_{c\to\infty}\frac{1}{c}\log(\int_{\sigma^{k}(z)=y_{c,k}}e^{cR_{-}^{k}(z)}\chi_A(z) \, dm)\]
\[\geq \limsup_{k\to\infty}\liminf_{c\to\infty}\frac{1}{c}\log(\int_{A_{k,\epsilon}}e^{cR_{-}^{k}(a_{k}...a_{1}y_{c,k})} \, dm)\]
\[\geq \limsup_{k\to\infty}\liminf_{c\to\infty}\frac{1}{c}\log(\int_{A_{k,\epsilon}}e^{cR_{-}^{k}(a_{k}...a_{1}y_{0}) - c|R_{-}|_{\theta}d(y_{c,k},y_{0})} \, dm)\]
\begin{equation}
=\limsup_{k\to\infty}\liminf_{c\to\infty}\frac{1}{c}\log(\int_{A_{k,\epsilon}}e^{cR_{-}^{k}(a_{k}...a_{1}y_{0})} \, dm). \label{3.3}
\end{equation}

Using (\ref{3.1}), (\ref{3.3}) and (\ref{3.2}), we finish the proof.
\end{proof}

Now we fix the point $y_{0}$ given above. The next result is
basically contained in the proof of proposition 5 in $\cite{BLT}$.

\begin{lemma}\label{acumula}
Let $p$ be a point on the support of $\mu_{\infty}$. Let $y_{n}$ a
sequence satisfying $\sigma(y_{n}) = y_{n-1}, n=1,2,3,...$, and,
$0=R_{-}(y_{1})=R_{-}(y_{2})=...$ (it follows from the property of
the calibrated subaction). Then, $p$ is a accumulation point of
$\{y_{n}\}$.
\end{lemma}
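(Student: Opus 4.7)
The plan is to show that the set of accumulation points of $\{y_n\}$ is a compact, forward $\sigma$-invariant set on which $R_{-}$ vanishes, and then use the uniqueness of the maximizing probability to conclude that this set contains the support of $\mu_{\infty}$.

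First I would set $\Omega$ to be the set of all accumulation points of the sequence $\{y_n\}_{n\geq 1}$ in $X$. Since $X$ is compact and the sequence is infinite, $\Omega$ is non-empty and compact. Then I would verify that $\sigma(\Omega)\subseteq\Omega$: if $q\in\Omega$, choose a subsequence $y_{n_k}\to q$ with $n_k\to\infty$; by continuity of $\sigma$, $y_{n_k-1}=\sigma(y_{n_k})\to\sigma(q)$, and since $n_k-1\to\infty$, this shows $\sigma(q)\in\Omega$. Next I would observe that $R_{-}\equiv 0$ on $\Omega$: by hypothesis $R_{-}(y_n)=0$ for all $n$, so for any $q\in\Omega$ the continuity of $R_{-}$ (which follows from the Hölder regularity of $f$ and $V$) gives $R_{-}(q)=\lim_k R_{-}(y_{n_k})=0$.

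The key step is now a Krylov--Bogoliubov argument restricted to $\Omega$. Since $\sigma$ maps $\Omega$ into itself, pick any $q_0\in\Omega$ and consider weak-$^*$ accumulation points $\mu$ of the empirical averages $\frac{1}{N}\sum_{n=0}^{N-1}\delta_{\sigma^n(q_0)}$. Each such $\mu$ is $\sigma$-invariant and is supported on $\overline{\{\sigma^n(q_0):n\geq 0\}}\subseteq\Omega$. Because $R_{-}\equiv 0$ on $\Omega$, we obtain $\int R_{-}\,d\mu=0$; using $\sigma$-invariance of $\mu$ and the definition $R_{-}=f+V-V\circ\sigma-\beta(f)$, this yields $\int f\,d\mu=\beta(f)$, so $\mu$ is a maximizing probability. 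By the assumed uniqueness of the maximizing measure, $\mu=\mu_{\infty}$, and therefore $\mathrm{supp}(\mu_{\infty})\subseteq\Omega$. In particular, the given point $p\in\mathrm{supp}(\mu_{\infty})$ belongs to $\Omega$, which is exactly the statement that $p$ is an accumulation point of $\{y_n\}$.

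The only mild subtlety I anticipate is the forward-invariance step, since $\sigma$ here is not invertible and the preimages are uncountable; one must be careful that accumulation points propagate under $\sigma$ rather than under a branch of $\sigma^{-1}$. This is handled correctly by extracting the subsequence $n_k\to\infty$ and then passing to $n_k-1$, which still tends to infinity, so the conclusion $\sigma(q)\in\Omega$ is genuine. Everything else is a direct application of continuity, Krylov--Bogoliubov, and uniqueness of the maximizing measure.
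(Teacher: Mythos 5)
Your proof is correct and follows essentially the same route as the paper's: both define the set of accumulation points of $\{y_n\}$, use its closedness and $\sigma$-invariance together with Krylov--Bogoliubov to produce an invariant probability supported there, deduce from continuity of $R_{-}$ and $R_{-}(y_n)=0$ that this probability is maximizing, and invoke uniqueness to conclude $\mathrm{supp}(\mu_{\infty})$ lies in that set. Your write-up is in fact a bit more careful than the paper's (which asserts $\sigma(B)=B$ and the existence of the invariant measure without detail), but the argument is the same.
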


\begin{proof}
Let $B$ be the set of accumulation points of $\{y_{n}\}$. B is
closed and $\sigma(B) = B$. Then there exists a invariant
probability $\nu$ with support on $B$. The inclusion $B\subseteq X$
implies the existence of an extension of $\nu$ to $X$ by the rule:
$\nu(\phi) := \nu(\phi.\chi_B)$. Using the fact that $R_{-}$ is a
continuous function, and, that $R_{-}(y_{n})=0, n=1,2,...$, we
conclude that $\chi_{B}.R_{-}  = 0$. Therefore, $\nu(R_{-}) = 0$, and then,
$\nu = \mu_{\infty}$. From this we get that  the support of $\mu_{\infty}$ is contained on $B$.%
\end{proof}

\bigskip

The next lemma follows the same reasoning of Lemma 18 in
$\cite{LMST}$:

\begin{lemma}\label{LMRT}
If $R_{-}^{\infty}(z) > -\infty$, then the family of probabilities
$\nu_{n}$ (also called empirical measures), given by $\phi \to
\frac{1}{n}\sum_{j=0}^{n-1}\phi(\sigma^{j}(z))$, converges to
$\mu_{\infty}$ weakly*, when $n\to \infty$.
\end{lemma}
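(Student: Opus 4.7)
The plan is to show that every weak$^*$ accumulation point of the sequence $\nu_n$ equals $\mu_\infty$; since the space of Borel probabilities on the compact space $X$ is weak$^*$ compact, this implies the full sequence converges to $\mu_\infty$.

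Fix a subsequence along which $\nu_{n_k} \to \nu$ weakly$^*$. First I would verify that $\nu$ is $\sigma$-invariant. For any continuous $\phi\colon X\to\mathbb{R}$,
\[
\int \phi\, d\nu_{n} - \int \phi\circ\sigma\, d\nu_{n} = \frac{1}{n}\bigl(\phi(z)-\phi(\sigma^n z)\bigr) \longrightarrow 0,
\]
so passing to the limit yields $\int \phi\, d\nu = \int \phi\circ\sigma\, d\nu$. This is the classical Krylov--Bogolyubov argument.

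The decisive step is to exploit the hypothesis $R_{-}^{\infty}(z) > -\infty$. The partial sums $R_{-}^{n}(z) = \sum_{j=0}^{n-1} R_{-}(\sigma^j z)$ are then a convergent (hence bounded) sequence of real numbers, so
\[
\int R_{-}\, d\nu_n \;=\; \frac{R_{-}^{n}(z)}{n} \;\longrightarrow\; 0.
\]
Since $V$ is continuous (H\"older in fact), the function $R_{-}=f+V-V\circ\sigma-\beta(f)$ is continuous, and weak$^*$ convergence along $n_k$ gives $\int R_{-}\, d\nu = 0$. Recalling that $R_{-}\le 0$ (a consequence of $V$ being a calibrated subaction), this forces $R_{-} = 0$ $\nu$-a.e. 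Now using $\sigma$-invariance of $\nu$,
\[
\int f\, d\nu \;=\; \beta(f) + \int V\, d\nu - \int V\circ\sigma\, d\nu \;=\; \beta(f),
\]
so $\nu$ is a maximizing probability. By the uniqueness assumption, $\nu=\mu_\infty$.

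Since every weak$^*$ accumulation point of $\{\nu_n\}$ coincides with $\mu_\infty$, the whole sequence converges to $\mu_\infty$. There is no real obstacle here: the hypothesis $R_{-}^{\infty}(z)>-\infty$ is exactly what is needed to make the ergodic average of the continuous function $R_{-}$ vanish along the orbit of $z$, and the rest is the standard argument that identifies a maximizing measure through $\int R_{-}\, d\nu = 0$.
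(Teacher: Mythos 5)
Your proof is correct and follows essentially the same route as the paper: both arguments rest on the observation that $R_{-}^{\infty}(z)>-\infty$ makes the partial sums $R_{-}^{n}(z)$ bounded, so that any (necessarily invariant) weak$^*$ accumulation point $\nu$ of the empirical measures satisfies $\int f\,d\nu=\beta(f)$ and hence equals $\mu_\infty$ by uniqueness. The only difference is cosmetic --- the paper carries the coboundary $V-V\circ\sigma$ explicitly and bounds the Birkhoff averages of $f$ below using $|V|_\infty$, while you absorb it into $R_{-}$ and pass $\int R_{-}\,d\nu_n=R_{-}^{n}(z)/n\to 0$ to the limit --- and your sign slip in the final display ($\int V\,d\nu$ and $\int V\circ\sigma\,d\nu$ interchanged) is immaterial since the two integrals cancel by invariance.
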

\begin{proof}

Note that any  measure which in accumulation point of $\nu_{n}$ is an invariant probability.
We are going to show that
\[\liminf_{n\to\infty}\frac{1}{n}\sum_{j=0}^{n-1}f(\sigma^{j}(z)) \geq \mu_{\infty}(f).\]
Let $M=R_{-}^{\infty}(z)$. Then, for each $n$ we have
$R_{-}^{n}(z)\geq M$, so:
\[V(z) - V(\sigma^{n}(z)) -n\mu_{\infty}(f) + \sum_{j=0}^{n-1}f(\sigma^{j}(z))= \sum_{j=0}^{n-1}R_{-}(\sigma^{j}(z)) =R_{-}^{n}(z)  \geq M.\]
Then, we get
\[\frac{1}{n}\sum_{j=0}^{n-1}f(\sigma^{j}(z)) \geq \frac{M}{n} -\frac{2|V|_{\infty}}{n} + \mu_{\infty}(f).\]
Finally, taking $\displaystyle{\liminf_{n\to\infty}}$ in the above expression we get our claim.%
\end{proof}

\begin{corollary}
If $R_{-}^{\infty}(z) > -\infty$, and, $p \in supp(\mu_{\infty})$,
then $p$ is an accumulation point of ${\sigma^{n}(z)}$.
\end{corollary}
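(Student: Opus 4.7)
The plan is to deduce the corollary directly from the weak-$*$ convergence of empirical measures provided by Lemma \ref{LMRT}. Write $\nu_n := \tfrac{1}{n}\sum_{j=0}^{n-1}\delta_{\sigma^j(z)}$, so that the hypothesis $R_-^\infty(z)>-\infty$ gives $\nu_n \to \mu_\infty$ in the weak-$*$ topology. The goal is to show that for every open neighborhood $U$ of $p$, the orbit $\{\sigma^n(z)\}$ meets $U$ for infinitely many $n$; this is exactly what it means for $p$ to be an accumulation point of $\{\sigma^n(z)\}$.

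Fix $p\in\mathrm{supp}(\mu_\infty)$ and any open neighborhood $U$ of $p$. By definition of the support, $\mu_\infty(U)>0$. Applying the Portmanteau characterization of weak-$*$ convergence to the open set $U$ yields
\[
\liminf_{n\to\infty}\nu_n(U) \geq \mu_\infty(U) > 0.
\]
Since $\nu_n(U) = \tfrac{1}{n}\#\{0\leq j\leq n-1 : \sigma^j(z)\in U\}$, the right-hand side being strictly positive forces the counting number on the right to tend to infinity; in particular, there exist infinitely many indices $j$ with $\sigma^j(z)\in U$. As $U$ was an arbitrary open neighborhood of $p$, it follows that $p$ is an accumulation point of $\{\sigma^n(z)\}$.

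As an alternative and essentially equivalent route, one may test the weak-$*$ convergence against a continuous bump function $\phi\geq 0$ supported in $U$ with $\int \phi\,d\mu_\infty>0$ (such $\phi$ exists because $p\in\mathrm{supp}(\mu_\infty)$ and $U$ is open): then $\tfrac{1}{n}\sum_{j=0}^{n-1}\phi(\sigma^j(z))\to \int \phi\,d\mu_\infty>0$, which again forces $\sigma^j(z)\in U$ for infinitely many $j$. There is no real obstacle here; the only thing to be careful about is that Portmanteau gives a $\liminf$ inequality on open sets (not equality), but this inequality with a strictly positive right-hand side is exactly what is needed.
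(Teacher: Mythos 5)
Your proof is correct and follows essentially the same route as the paper: both invoke Lemma \ref{LMRT} to get weak$^*$ convergence of the empirical measures, note that $\mu_\infty(U)>0$ for any neighborhood $U$ of $p\in\mathrm{supp}(\mu_\infty)$, and conclude infinitely many visits of the orbit to $U$. The only difference is that you make explicit the Portmanteau $\liminf$ inequality for open sets, a step the paper leaves implicit in the phrase ``by the above lemma.''
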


\begin{proof}
Let $p \in supp(\mu_{\infty})$, and $\varepsilon>0$. Consider the
ball $B(p,\varepsilon):=\{x\in X: d(x,p)<\varepsilon\}$. Using the fact that $p \in supp(\mu_{\infty})$, we have that
$\mu_{\infty}(B(p,\varepsilon))>0$. Therefore, by the above lemma, we have
that $\{\sigma^{n}(z)\}$ is in this ball for infinite values of $n$.
\end{proof}

\begin{lemma}
\[\sup_{z \in A} R_{-}^{\infty}(z) \leq \limsup_{k\to\infty}
(\sup_{z\in A,\, \sigma^{k}(z)=y_{0}}R_{-}^{k}(z)).\]
\end{lemma}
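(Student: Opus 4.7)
The plan is to reduce to proving, for each fixed $z\in A$ with $R_{-}^{\infty}(z)>-\infty$, the pointwise inequality
$$R_{-}^{\infty}(z)\le\limsup_{k\to\infty}\sup_{w\in A,\ \sigma^{k}(w)=y_{0}}R_{-}^{k}(w),$$
since the case $R_{-}^{\infty}(z)=-\infty$ is trivial and taking the sup over $z\in A$ then yields the lemma. The competitor $w$ will be built by splicing: keep the first $N$ symbols of $z$ (to stay in the open set $A$ and pick up most of $R_{-}^{\infty}(z)$), then append a backward orbit of $y_{0}$ (to land at $y_{0}$ at no cost in $R_{-}$).

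First I apply Lemma \ref{acumula}: choose a sequence $(y_{n})_{n\ge 1}$ with $\sigma(y_{n})=y_{n-1}$ and $R_{-}(y_{n})=0$ for every $n\ge 1$. Then $\sigma^{n}(y_{n})=y_{0}$ and, since $\sigma^{j}(y_{n})=y_{n-j}$, one gets $R_{-}^{n}(y_{n})=\sum_{j=0}^{n-1}R_{-}(y_{n-j})=0$. By that same lemma, every $p\in\mathrm{supp}(\mu_{\infty})$ is an accumulation point of $\{y_{n}\}$. On the other hand, the Corollary preceding the present statement, whose use is exactly the reason we may assume $R_{-}^{\infty}(z)>-\infty$, yields that every such $p$ is an accumulation point of the forward orbit $\{\sigma^{N}(z)\}$. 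Fixing any such $p$, one can therefore pick $N$ and $n$ arbitrarily large with $\sigma^{N}(z)$ and $y_{n}$ both within distance $\delta/2$ of $p$, hence $d(\sigma^{N}(z),y_{n})<\delta$, for any prescribed $\delta>0$.

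Since $A$ is open, fix $n_{0}$ such that every point agreeing with $z$ on its first $n_{0}$ coordinates lies in $A$. Given $\varepsilon>0$, choose $\delta$ small enough that $|R_{-}|_{\theta}\delta/(1-\theta)\le\varepsilon$, and then pick $N\ge n_{0}$ and $n$ as in the previous paragraph, with $N$ additionally so large that $R_{-}^{N}(z)>R_{-}^{\infty}(z)-\varepsilon$. Define
$$w:=(z_{0},z_{1},\dots,z_{N-1},y_{n}^{(0)},y_{n}^{(1)},\dots),$$
the first $N$ symbols of $z$ followed by $y_{n}$. Then $w\in A$ (its first $n_{0}$ coordinates match those of $z$), $\sigma^{N}(w)=y_{n}$ so $\sigma^{N+n}(w)=y_{0}$, and, using $R_{-}^{n}(y_{n})=0$, one obtains $R_{-}^{N+n}(w)=R_{-}^{N}(w)$.

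A routine H\"older estimate completes the argument: $\sigma^{j}(w)$ and $\sigma^{j}(z)$ coincide on their first $N-j$ coordinates while their tails are at distance at most $\delta$, giving $d(\sigma^{j}(w),\sigma^{j}(z))\le\theta^{N-j}\delta$ and hence $|R_{-}^{N}(w)-R_{-}^{N}(z)|\le|R_{-}|_{\theta}\delta/(1-\theta)\le\varepsilon$. Combining, $R_{-}^{N+n}(w)\ge R_{-}^{\infty}(z)-2\varepsilon$, and since $k:=N+n$ can be taken arbitrarily large, letting $\varepsilon\to 0$ and supping over $z\in A$ gives the desired inequality. The main obstacle is the joint-accumulation step in paragraph two: both $N$ and $n$ must tend to infinity while $\sigma^{N}(z)$ and $y_{n}$ approach a common point of $\mathrm{supp}(\mu_{\infty})$, which is exactly what the combination of Lemma \ref{acumula} and the preceding Corollary provides, and is what allows the backward orbit of $y_{0}$ and the forward orbit of $z$ to be "welded" without losing Birkhoff weight.
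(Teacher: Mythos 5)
Your proof is correct and follows essentially the same route as the paper: both arguments splice the first $N$ symbols of $z$ with a zero-cost backward orbit of $y_{0}$ (supplied by Lemma \ref{acumula} together with the calibrated-subaction property), welded near a common accumulation point $p\in supp(\mu_{\infty})$ provided by the preceding Corollary, and both control the splicing error by the same H\"older estimate $|R_{-}|_{\theta}\,\delta/(1-\theta)$ before letting the approximation parameters go to their limits. The only cosmetic difference is that you dispose of points with $R_{-}^{\infty}(z)=-\infty$ trivially at the pointwise level, whereas the paper first verifies $\sup_{z\in A}R_{-}^{\infty}(z)>-\infty$ by pulling a point of $supp(\mu_{\infty})$ back into $A$; both reductions are valid.
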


\begin{proof}
We fix a point $p\in supp(\mu_{\infty})$, and, we denote
$p=p_{1}p_{2}...$. For $n\geq n_{0}$, there exists $y\in A$, such that
$\sigma^{n}(y)=p.$ Note that
$R_{-}^{\infty}(y)=R_{-}^{n}(y)>-\infty.$ Therefore, $\sup_{z \in A}
R_{-}^{\infty}(z)>-\infty$. Let $z_{0}\in A$ be such that,
$R_{-}^{\infty}(z_{0})>-\infty$, and, denote $z_{0}=x_{1}x_{2}...$.
Given $t\in \mathbb{N}$, let $n(t)$ be such that,
$d(\sigma^{n(t)}(z_{0}),p) \leq \frac{1}{2^{t}}$, and moreover, such
that, the choice $x_{1}x_{2}...x_{n(t)}$,  determines that $z_{0}\in
A$ (open).
\newline
From Lemma $\ref{acumula}$ there exists a pre-image of $y_{0}$ (we
assume  it is a point  $y_{l(t)}$ of the form $a_{l(t)}...a_{1}(y_{0})$),
such that,
 $R_{-}^{l(t)}( y_{l(t)})=0$, and, $d(y_{l(t)},p)<\frac{1}{2^{t}}$.
Define $z(t)$ by $$z(t):= x_{1}...x_{n(t)}a_{l(t)}...a_{1}(y_{0}).$$
Then,
\begin{align*}
\sup_{z\in A: \sigma^{l(t)+n(t)}(z)=y_{0}}& R_{-}^{l(t)+n(t)}(z) \geq R_{-}^{l(t)+n(t)}(z(t))\\
&= R_{-}^{n(t)}(z(t)) + R_{-}^{l(t)}(y_{l(t)}) = R_{-}^{n(t)}(z(t))\\
&\geq R_{-}^{n(t)}(z_{0}) - |R_{-}|_{\theta}2(\frac{1}{2^{t}}+\frac{1}{2^{t+1}}+...+\frac{1}{2^{t+n(t)}}) \\
&\geq R_{-}^{n(t)}(z_{0}) - \frac{|R_{-}|_{\theta}}{2^{t}} \geq R_{-}^{\infty}(z_{0}) - \frac{|R_{-}|_{\theta}}{2^{t}}.
\end{align*}

When, $t\to\infty$, we get
\[\limsup_{k\to\infty} (\sup_{z\in A,\, \sigma^{k}(z)=y_{0}}R_{-}^{k}(z)) \geq \limsup_{t\to\infty} \sup_{z\in A: \sigma^{l(t)+n(t)}(z)=y_{0}} R_{-}^{l(t)+n(t)}(z)\]
\[\geq R_{-}^{\infty}(z_{0}).\]
Using the fact that $z_{0}$ is arbitrary, and satisfies
$R_{-}^{\infty}(z_{0})>-\infty$, we finish the proof.
\end{proof}

From this result and lemma \ref{maiorque} we get:
\begin{proposition}
\[\liminf_{c,n\to\infty} \frac{1}{c}\log((L_{g_{c}}^{n}\chi_A)(x)) \geq \sup_{z\in A}R_{-}^{\infty}(z)= - \inf_{z\in A} (R_{+}^{\infty}(z)).\]
\end{proposition}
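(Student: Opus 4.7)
The plan is to chain the two previous results directly. Lemma \ref{maiorque} gives the lower bound
\[\liminf_{c,n \to \infty} \frac{1}{c}\log((L_{g_{c}}^{n}\chi_A)(x)) \geq  \limsup_{k\to\infty} \Bigl(\sup_{z\in A,\, \sigma^{k}(z)=y_{0}}R_{-}^{k}(z)\Bigr),\]
where $y_0$ is a specific accumulation point constructed inside that lemma. The preceding lemma then shows that this pre-image supremum dominates the full supremum of $R_{-}^{\infty}$ over $A$, namely
\[\sup_{z \in A} R_{-}^{\infty}(z) \leq \limsup_{k\to\infty} \Bigl(\sup_{z\in A,\, \sigma^{k}(z)=y_{0}}R_{-}^{k}(z)\Bigr).\]
So the proof is simply transitivity: concatenate these two inequalities.

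First I would state that if $\sup_{z\in A} R_{-}^{\infty}(z)=-\infty$ there is nothing to prove, since the right-hand side is then trivially the smallest possible value and the liminf on the left is automatically $\geq -\infty$. Otherwise, I would fix any $z_0 \in A$ with $R_{-}^{\infty}(z_0) > -\infty$ (these exist by the construction in the preceding lemma using a point $p$ in $\mathrm{supp}(\mu_\infty)$ and a pre-image by $\sigma^n$ for $n\geq n_0$), and simply write down the chain
\[\liminf_{c,n\to\infty} \frac{1}{c}\log((L_{g_{c}}^{n}\chi_A)(x)) \;\geq\; \limsup_{k\to\infty} \Bigl(\sup_{z\in A,\, \sigma^{k}(z)=y_{0}}R_{-}^{k}(z)\Bigr) \;\geq\; R_{-}^{\infty}(z_0),\]
invoking Lemma \ref{maiorque} for the first inequality and the previous lemma for the second. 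Then I would take the supremum over such $z_0 \in A$ to obtain
\[\liminf_{c,n\to\infty} \frac{1}{c}\log((L_{g_{c}}^{n}\chi_A)(x)) \geq \sup_{z\in A}R_{-}^{\infty}(z),\]
which is the desired inequality.

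There is essentially no obstacle here: all the work has already been done in the two preceding lemmas. The only thing to check is consistency of the point $y_0$ across the two lemmas, and this is automatic because the previous lemma was formulated for the same $y_0$ that Lemma \ref{maiorque} produced (\textquotedblleft Now we fix the point $y_0$ given above\textquotedblright). Thus the proof is a one-line chain of the two inequalities followed by taking the supremum over $z_0 \in A$ with $R_{-}^{\infty}(z_0) > -\infty$.
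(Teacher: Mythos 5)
Your proposal is correct and matches the paper exactly: the paper's entire proof of this proposition is the single phrase ``From this result and lemma \ref{maiorque} we get,'' i.e.\ precisely the transitive chaining of the two preceding lemmas (with the same fixed $y_0$) that you describe. Your extra remarks on the $-\infty$ case and on taking the supremum over $z_0$ merely make explicit what the preceding lemma already delivers, so there is nothing to add.
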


This finish the proof of Theorem \ref{ldpruelle1}.

\end{document}